\newtheorem{theorem}{Theorem}
\newtheorem{proposition}{Proposition}
\newtheorem{corollary}[theorem]{Corollary}
\title{Isometries of perfect  norm  ideals of compact operators}
\keywords{Banach symmetric ideal of compact operators,  Fatou property, Hermitian operator,
 2-local isometry}
\subjclass[2010]{46L52(primary), 46B04 (secondary)}
\begin{document}
\date{March 4, 2017}
\begin{abstract}
It is proved that for every surjective linear isometry $V$ on a perfect Banach symmetric ideal $\mathcal C_E \neq \mathcal C_{2}$  of compact operators, acting in a complex separable infinite-dimensional Hilbert $\mathcal H$ there exist unitary operators $u$ and $v$ on $\mathcal H$ such that $V(x) = uxv$ \ or \ $V(x) = ux^tv$
for all $x \in \mathcal C_E$, where $x^t $ is a transpose of an operator $x$ with respect to a fixed orthonormal basis for $\mathcal H$. In addition, it is shown that any  surjective 2-local isometry on a  perfect Banach symmetric ideal  $\mathcal C_E \neq \mathcal C_{2}$ is a linear isometry on  $\mathcal C_E$.
\end{abstract}

\author{Behzod Aminov}
\address{National University of Uzbekistan\\ Tashkent,  700174, Uzbekistan}
\email{behzod2@list.ru, aminovbehzod@gmail.com}

\author{Vladimir Chilin}
\address{National University of Uzbekistan\\ Tashkent,  700174, Uzbekistan}
\email{ vladimirchil@gmail.com, chilin@ucd.uz}

\maketitle

\section{Introduction}

Let  $\mathcal H$ be a complex separable infinite-dimensional Hilbert space. Let   $(E,\|\cdot\|_E) \subset c_0$  be a real  Banach symmetric sequence space. Consider an ideal $\mathcal C_E$  of compact linear operators in $\mathcal H$,  which is defined by the relations
$$ x\in \mathcal C_E \Longleftrightarrow \{s_n(x)\}_{n=1}^{\infty}\in E$$
and
$$ \|x\|_{\mathcal C_E}=\|\{s_n(x)\}_{n=1}^{\infty}\|_E,$$
where $\{s_n(x)\}_{n=1}^{\infty}$ are the singular values of $x$ (i.e. the eigenvalues of $(x^*x)^{1/2}$ in decreasing order). In the paper \cite{KS} it is shown  that $\|\cdot\|_{\mathcal C_E}$ is a Banach norm on  $\mathcal C_E$. In addition,  $\mathcal C_1:=\mathcal C_{l_1} \subset \mathcal C_E.$

 In the case when $(E,\|\cdot\|_E)$ is a separable Banach space, the Banach  ideal $(\mathcal C_E,\|\cdot\|_{\mathcal C_E})$ is a minimal Banach ideal in the terminology of Schatten \cite{S}, i.e. the set of finite rank operators is dense  in $\mathcal C_E$.

It is known \cite{sourour} that for every surjective linear isometry $V$ on a minimal Banach ideal  $\mathcal C_E \neq \mathcal C_{2}$, where $\mathcal C_{2}:=\mathcal C_{l_2}$, there exist unitary operators $u$ and $v$ on $\mathcal H$ such that
\begin{equation}\label{e1}
V(x) = uxv \ \ (\text{or} \ \ V(x) = ux^tv)
\end{equation}
for all $x \in \mathcal C_E$, where $x^t $ is the transpose of the operator $x$ with respect to a fixed orthonormal basis in $\mathcal H$. In the case of a Banach  ideal $\mathcal C_{1}$,  a description of surjective linear isometries  of the form of (\ref{e1}) was obtained in \cite{r}, and  for the ideals $\mathcal C_{p}:= \mathcal C_{l_p}, \ 1 \leq p \leq \infty, \ p\neq 2$, in \cite{a}.

In this paper we show that, in the case  a  Banach symmetric sequence space $E\neq l_2$  with   Fatou property, every surjective linear isometry $V$ on $\mathcal C_E$ has the form (1). In addition, it is proved that in this case any 2-local surjective isometry on $\mathcal C_E$ also is of the form (1).

\section{Preliminaries}
Let $l_{\infty}$ (respectively, $c_0$)  be a Banach lattice of all bounded (respectively, converging to zero) sequences $\{ \xi_n \}_{n = 1}^{\infty}$  of
 real numbers with respect to the norm $\|\{\xi_n\}\|_{\infty} = \sup\limits_{n \in \mathbb N} |\xi_n|$, where $\mathbb N $ is the set of natural numbers.
If  $ x = \{\xi_n\}_{n = 1}^{\infty} \in l_{\infty}$, then a non-increasing rearrangement of $x$  is defined by
$$
x^* = \{ \xi_n^* \}, \ \ \text{where}  \ \ \ \xi_n^*: = \inf\limits_{| F | <n}\sup\limits_{n \notin F} |\xi_n |, \ \ F \  \text{is a finite subset of} \ \ \mathbb N.
$$
A non-zero linear subspace  $E \subset l_{\infty}$ with a Banach norm $\|\cdot\|_{E}$ is called {\it Banach symmetric sequences  space} if the conditions  $ y \in E$, $ x \in l_{\infty}, \  x^* \leq y^* $, imply that $ x \in E $ and $ \| x \|_E \leq \| y\|_E $. In this case, the  inequality $\| x \|_{\infty} \leq \|x\|_E $ follows  for each $ x \in E $.

Let $\mathcal H$ be a complex separable infinite-dimensional Hilbert space and let
 $\mathcal B(\mathcal H)$ (respectively,  $\mathcal K(\mathcal H), \mathcal F(\mathcal H)$) be the $\ast$-algebra of all bounded (respectively, compact, finite rank) linear operators in $\mathcal H$. It is well known
 that $$\mathcal F(\mathcal H) \subset \mathcal I\subset \mathcal K(\mathcal H)$$ for any proper two-sided ideal
$\mathcal I$ in $\mathcal B(\mathcal H)$ (see for example, \cite[Proposition 2.1]{SI}).

If $(E, \|\cdot\|_{E}) \subset c_0$ is a   Banach symmetric sequence  space, then the set
$$\mathcal{C}_E:=\{ x \in \mathcal K(\mathcal H) : \{s_n(x)\}_{n=1}^\infty \in E\}$$
is a proper two-sided ideal in $\mathcal B(\mathcal H)$ (\cite{C}; see also  \cite[Theorem 2.5]{SI}). In addition, $(\mathcal{C}_E, \|\cdot\|_{\mathcal{C}_E})$) is a Banach  space with respect to the norm $ \|x\|_{\mathcal C_E}=\|\{s_n(x)\}_{n=1}^{\infty}\|_E$ \cite{KS}, and the norm $ \|\cdot\|_{\mathcal C_E}$ has the following properties

1) $\|xzy\|_{\mathcal C_E} \leq
\|x\|_{\infty}\|y\|_{\infty}\|z\|_{\mathcal C_E}$ for all $x,y\in
\mathcal B(\mathcal H)$ and $z\in \mathcal C_E$, where $\|\cdot\|_{\infty}$ is the usual
operator norm in $\mathcal B(\mathcal H)$;

2) $\|x\|_{\mathcal C_{E}}=\|x\|_{\infty}$ if $x \in \mathcal C_E$  is of rank 1.

In this case we say that $(\mathcal{C}_E, \|\cdot\|_{\mathcal{C}_E})$) is a {\it Banach symmetric
ideal}  (cf. \cite[Ch. III]{gohberg}, \cite[Ch. 1, \S 1.7]{SI}). \
It is clear that
 $\|uxv\|_{\mathcal{C}_E} = \|x\|_{\mathcal{C}_E}$ for all unitary operators $u,v\in \mathcal B(\mathcal H)$ and $x\in {\mathcal{C}_E}$. Besides,
$$
\mathcal C_1 \subset \mathcal C_E \subset \mathcal K(\mathcal H) \ \ \text{and} \ \ \|x\|_{\mathcal C_E}\leq  \|x\|_{\mathcal C_1}, \ \ \|y\|_{\infty}\leq  \|y\|_{\mathcal C_E}
$$
for all $x \in \mathcal{C}_1, \ y \in \mathcal{C}_E$.

A Banach symmetric sequence space $(E, \|\cdot\|_{E})$ (respectively, a Banach symmetric ideal
$(\mathcal{C}_E, \|\cdot\|_{\mathcal{C}_E})$)   is said to have {\it order continuous norm} if $\| a_{k}\|_E\downarrow
0$ (respectively, $\| a_{k}\|_{\mathcal{C}_E}\downarrow 0$) whenever $a_{k}\in E$ (respectively, $a_{k}\in \mathcal{C}_E)$ and
$a_{k}\downarrow 0$.  It is known that $(\mathcal{C}_E, \|\cdot\|_{\mathcal{C}_E}))$ has order continuous norm if  and
only if $(E, \| \cdot \|_E)$  has order continuous norm (see, for example, \cite[Proposition 3.6]{DDP1}). Moreover,
every  Banach symmetric ideal with order continuous norm is  a minimal norm ideal, i.e. the subspace $\mathcal
F(\mathcal H)$  is dense  in $\mathcal C_E$.

If $(E, \|\cdot\|_{E})$ is a  Banach symmetric sequence space  (respectively, $(\mathcal{C}_E, \|\cdot\|_{\mathcal{C}_E})$ is  a Banach symmetric ideal), then the K\"{o}the  dual $E^\times$ (respectively, $\mathcal{C}_E^\times$) is defined as
$$
E^\times=\{\xi=\{\xi_n\}_{n=1}^{\infty} \in l_{\infty}\,: \ \, \xi\eta\in l_1 \ \ \text{for all} \ \ \eta\in E\}
$$
$$
(\text{respectively}, \ \ \mathcal{C}_E^\times=\{x\in B(H)\,: \ \, xy\in \mathcal{C}_1 \ \ \text{for all} \ \ y\in \mathcal{C}_E\})
$$
and
$$
\|\xi\|_{E^\times}=\sup\limits\{\sum\limits_{n=1}^{\infty}|\xi_n \eta_n|: \eta=\{\eta_n\}_{n=1}^{\infty} \in E, \ \|\eta\|_{E}\leq 1  \} \ \ \text{if} \ \ \xi \in E^\times
$$
$$
(\text{respectively}, \ \ \|x\|_{\mathcal{C}_E^\times}=\sup\limits\{tr(|xy|): y\in \mathcal{C}_E, \ \|y\|_{\mathcal{C}_E}\leq 1  \} \ \ \text{if} \ \ x \in \mathcal{C}_E^\times),
$$
where $tr(\cdot)$ is the standard trace on  $\mathcal B(\mathcal H)$.

It is known that $(E^\times, \|\cdot\|_{E^\times})$ (respectively, $(\mathcal{C}_E^\times,
\|\cdot\|_{\mathcal{C}_E^\times})$ is a  Banach symmetric sequence space (respectively, a  Banach symmetric ideal). In addition, $\mathcal{C}_1^\times = \mathcal B(\mathcal H)$ and if $\mathcal{C}_E \neq \mathcal{C}_1$, then
$\mathcal{C}_E^\times \subset \mathcal K(\mathcal H)$ \cite[Proposition 7]{garling}. We also note the following useful
property \cite[Theorem 5.6]{DDP1}:
$$
(\mathcal{C}_E^\times, \|\cdot\|_{\mathcal{C}_E^\times})=(\mathcal{C}_{E^\times}, \|\cdot\|_{\mathcal{C}_{E^\times}}).
$$
A  Banach symmetric ideal $\mathcal{C}_E$  is said to be {\it perfect} if $\mathcal{C}_E = \mathcal{C}_E^{\times\times}$. It
 is clear that $\mathcal{C}_E$  is perfect if and only if $E = E^{\times\times}$.

A Banach symmetric  sequence space $(E, \|\cdot\|_{E})$
 (respectively, a   Banach symmetric ideal $(\mathcal{C}_E, \|\cdot\|_{\mathcal{C}_E})$) is said to possess
{\it Fatou property} if the conditions
$$
 0 \leq a_{k }\leq a_{k+1}, \  a_{k}\in E \ \ \text{(respectively}, \ a_{k}\in \mathcal{C}_E) \ \ \text{for all} \ \ k \in \mathbb N
 $$
$$ \text{\ and\ } \ \ \sup_{k\geq 1} \| a_{k}\|_E<\infty \ \ \text{(respectively}, \ \sup_{k\geq 1} \| a_{k}\|_{\mathcal{C}_E}<\infty)
$$
imply that there exists $a \in E \ (a \in \mathcal{C}_E)$ such that $a_{k}\uparrow a$ and $\| a\|_E=\sup \limits_{k\geq
1} \| a_{k}\|_E$  (respectively, $\| a\|_{\mathcal{C}_E}=\sup \limits_{k\geq 1} \| a_{k}\|_{\mathcal{C}_E}$).

It is known that $(E, \|\cdot\|_{E})$ (respectively, $(\mathcal{C}_E, \|\cdot\|_{\mathcal{C}_E})$) has the Fatou
property if and only if $E = E^{\times\times}$ \cite[Vol. II, Ch. 1, Section a]{LT} (respectively,  $\mathcal{C}_E=
\mathcal{C}_E^{\times\times}$ \cite[Theorem 5.14]{DDP1}). Therefore
$$
\mathcal{C}_E  \ \ \text{is perfect} \ \Leftrightarrow \ \mathcal{C}_E=\mathcal{C}_E^{\times\times}\ \Leftrightarrow \ E = E^{\times\times} \ \Leftrightarrow
$$
$$
 E  \ \ \text{has the Fatou property} \ \Leftrightarrow \ \mathcal{C}_E  \ \ \text{has Fatou property}.
$$

If $y\in  \mathcal{C}_E^{\times}$, then a linear functional $f_y(x)=tr (xy)=tr (yx), \ x \in  \mathcal{C}_E$,  is continuous on  $ \mathcal{C}_E$. In addition, $\|f_y\|_{\mathcal{C}_E^{\ast}} = \|y\|_{\mathcal{C}_E^{\times}}$,   where $\mathcal{C}_E^*$ is the dual of the Banach space  $(\mathcal{C}_E, \|\cdot\|_{\mathcal{C}_E})$ (see, for example, \cite{garling}).
Identifying an element $y \in \mathcal{C}_E^{\times}$ and the linear functional $f_y$, we may assume that $\mathcal{C}_E^{\times}$ is a closed linear subspace in $ \mathcal{C}_E^*$.
Since $\mathcal F(\mathcal H) \subset \mathcal{C}_E^{\times}$, it follows that $\mathcal{C}_E^{\times}$ is a total subspace in $ \mathcal{C}_E^*$. Thus, the weak topology $\sigma(\mathcal{C}_E, \mathcal{C}_E^{\times})$ is a Hausdorff topology.

\begin{proposition}\label{p1}\cite[Proposition 2.8]{dodds}
A linear functional $f \in \mathcal{C}_E^*$ is continuous with respect to weak topology $\sigma(\mathcal{C}_E, \mathcal{C}_E^{\times})$ (i.e.  $f \in \mathcal{C}_E^\times$) if and only if $f(x_n)\to 0$ for every sequence $ x_n \downarrow 0, \ x_n \in \mathcal{C}_E, \ n \in \mathbb N$.
\end{proposition}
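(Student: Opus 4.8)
The equivalence in question has a routine half and a substantial half. First note that, since $\mathcal{C}_E^\times$ is a total subspace of $\mathcal{C}_E^*$ as recorded above, the assertion that $f$ is $\sigma(\mathcal{C}_E,\mathcal{C}_E^\times)$-continuous is, by the standard duality of dual pairs, literally the assertion $f\in\mathcal{C}_E^\times$; so the content to be proved is the equivalence
\[
f\in\mathcal{C}_E^\times \iff f(x_n)\to 0 \text{ whenever } x_n\downarrow 0,\ x_n\in\mathcal{C}_E.
\]
The plan is to treat the two implications separately: the forward one by a dominated-convergence argument for the trace, and the converse by first representing $f$ on the trace class and then using the order-continuity hypothesis to push the representation to all of $\mathcal{C}_E$.

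For the forward implication, suppose $f=f_y$ with $y\in\mathcal{C}_E^\times$. Writing $y$ as a linear combination of four positive operators obtained from its real and imaginary parts and their positive and negative parts — each of which again lies in $\mathcal{C}_E^\times$, since the singular values of these parts are dominated by those of $y$ — I reduce to the case $y\ge 0$. For $y\ge 0$ and $x_n\downarrow 0$ I would use that a decreasing sequence of positive operators with infimum $0$ converges to $0$ in the strong operator topology, so that, fixing an orthonormal basis $\{e_k\}$ and putting $\eta_k=y^{1/2}e_k$, each term of
\[
tr(yx_n)=tr(y^{1/2}x_ny^{1/2})=\sum_k\langle x_n\eta_k,\eta_k\rangle
\]
tends to $0$ as $n\to\infty$, while the whole sum is dominated by the summable $\sum_k\langle x_1\eta_k,\eta_k\rangle=tr(x_1y)<\infty$ (finite because $x_1y\in\mathcal{C}_1$). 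Dominated convergence for series then gives $f(x_n)=tr(yx_n)\to 0$.

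For the converse, assume $f(x_n)\to0$ for every $x_n\downarrow0$. Since $\mathcal{C}_1\subset\mathcal{C}_E$ with $\|\cdot\|_{\mathcal{C}_E}\le\|\cdot\|_{\mathcal{C}_1}$, the restriction $f|_{\mathcal{C}_1}$ is $\|\cdot\|_{\mathcal{C}_1}$-continuous, and since $\mathcal{C}_1^*=\mathcal{B}(\mathcal H)$ there is $y\in\mathcal{B}(\mathcal H)$ with $f(x)=tr(xy)$ for all $x\in\mathcal{C}_1$, in particular for all finite rank $x$. I would then show $y\in\mathcal{C}_E^\times$ by a finite-rank extremal argument: aligning prescribed singular values to those of $y$ by suitable finite orthonormal systems, one obtains for every $N$
\[
\big\|\{s_k(y)\}_{k\le N}\big\|_{E^\times}=\sup\big\{|tr(xy)|:x\in\mathcal F(\mathcal H),\ \operatorname{rank}x\le N,\ \|x\|_{\mathcal{C}_E}\le1\big\}\le\|f\|,
\]
and, letting $N\to\infty$ and invoking the Fatou property of the K\"othe dual $E^\times$ (which holds automatically, since $E^{\times\times\times}=E^\times$), conclude $\{s_k(y)\}\in E^\times$, i.e.\ $y\in\mathcal{C}_{E^\times}=\mathcal{C}_E^\times$. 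Finally, to upgrade $f=f_y$ from $\mathcal F(\mathcal H)$ to all of $\mathcal{C}_E$, set $g=f-f_y$; by the forward implication $f_y$ is order continuous, hence so is $g$, and $g$ vanishes on $\mathcal F(\mathcal H)$. For $x\ge0$ in $\mathcal{C}_E$ with spectral resolution $x=\sum_n\mu_np_n$ and finite-rank truncations $q_N=\sum_{n\le N}p_n$, the remainders $r_N=x-xq_N$ satisfy $r_N\downarrow0$, so $g(x)=g(r_N)\to0$, giving $g(x)=0$; by linearity $g\equiv0$ and $f=f_y\in\mathcal{C}_E^\times$.

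The main obstacle is the converse, and within it the identification $y\in\mathcal{C}_E^\times$: the extremal computation of $\big\|\{s_k(y)\}_{k\le N}\big\|_{E^\times}$ by finite-rank operators requires a careful singular-value (minimax) approximation of the bounded operator $y$, and it is precisely here that the Fatou property of $E^\times$ is used to pass from finite sections to the full sequence. By contrast, the step that genuinely consumes the order-continuity hypothesis — showing that an order-continuous functional $g$ vanishing on finite-rank operators must be zero — is conceptually the cleanest part, resting only on the spectral approximation $r_N\downarrow0$.
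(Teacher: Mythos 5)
The paper never proves this proposition: it is imported verbatim with the citation \cite[Proposition~2.8]{dodds}, so there is no internal argument to compare yours against, and your proof must be judged on its own terms. It is correct, and it follows the classical scheme for such normality results, in the full generality in which the statement is made. The forward half (reduction to $y\geq 0$, strong convergence of $x_n\downarrow 0$, dominated convergence applied to $tr(yx_n)=\sum_k\langle x_n\eta_k,\eta_k\rangle$) is sound, with one small repair to the reduction: pointwise domination of singular values can fail for $\mathrm{Re}\,y$ (e.g.\ for $y$ a rank-one nilpotent $2\times 2$ matrix one has $s(y)=(2,0)$ but $s(\mathrm{Re}\,y)=(1,1)$), so you should get $\mathrm{Re}\,y,\ \mathrm{Im}\,y\in\mathcal{C}_E^\times$ simply from linearity of $\mathcal{C}_E^\times$ and its closedness under adjoints, and only then use domination ($s_n(h^{\pm})\leq s_n(h)$ for self-adjoint $h$) for the positive and negative parts. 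The converse half is correctly structured: trace-duality representation of $f|_{\mathcal{C}_1}$ by some $y\in\mathcal{B}(\mathcal{H})$; the Ky Fan/von Neumann extremal identity over rank-$\leq N$ operators combined with the Fatou property of $E^\times$, which, as you rightly observe, is automatic from $E^{\times\times\times}=E^\times$ and hence imposes no hypothesis on $E$ itself; and the order-continuity argument with the spectral truncations $r_N\downarrow 0$ to force $f=f_y$ on all of $\mathcal{C}_E$. The one step that genuinely needs to be written out in full is exactly the one you flag: the construction of finite orthonormal systems approximately aligned with the generalized $s$-numbers of the possibly non-compact operator $y$ (via its polar decomposition and approximate spectral subspaces of $|y|$), together with the Hardy--Littlewood rearrangement step identifying $\sup_{\|\eta\|_E\leq 1}\sum_{k\leq N}s_k(y)|\eta_k|$ with $\|\{s_k(y)\}_{k\leq N}\|_{E^\times}$. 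Neither point is a gap; both are standard, and your argument is a legitimate self-contained substitute for the external reference the paper relies on.
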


It is clear that $\mathcal{C}_E^h = \{x \in \mathcal{C}_E: x = x^{\ast}\}$ is a real Banach space with respect to the
norm $\|\cdot\|_{\mathcal{C}_E}$; besides, the cone $\mathcal{C}_E^+ = \{x \in \mathcal{C}_E^h: x \geq 0\}$ is
closed in $\mathcal{C}_E^h$ and $\mathcal{C}_E^h = \mathcal{C}_E^+ - \mathcal{C}_E^+$. If  $ 0 \leq x_n \leq \ x_{n+1}
\leq x, \ x, x_n \in \mathcal{C}_E^h, \ n \in \mathbb N$, then there exists $\sup_{n\geq 1} x_n \in \mathcal{C}_E^h$,
i.e. $\mathcal{C}_E^h$ is a quasi-$(o)$-complete space \cite{an}. Hence, by \cite[Theorem 1]{an}, we have the following
proposition.

\begin{proposition}\label{p2}
Every linear functional  $f\in (\mathcal{C}_E^h)^*$ is a difference
of two positive linear functionals from $(\mathcal{C}_E^h)^*$.
\end{proposition}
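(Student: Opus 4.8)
The plan is to obtain this as an instance of the classical Jordan-type decomposition of a continuous functional on an ordered Banach space whose positive cone is closed, generating, and \emph{normal}, which is exactly the content of the cited general theorem; below I record the mechanism I would use. Three structural inputs are already available. The cone $\mathcal C_E^+$ is closed; it is generating, since every $x \in \mathcal C_E^h$ splits as $x = x^+ - x^-$ with $x^\pm \in \mathcal C_E^+$ via the functional calculus; and the norm is \emph{monotone} on it, i.e. $0 \le y \le x$ implies $\|y\|_{\mathcal C_E} \le \|x\|_{\mathcal C_E}$. This last property is the decisive quantitative ingredient: it follows from Weyl monotonicity of eigenvalues ($0 \le y \le x \Rightarrow s_n(y) \le s_n(x)$ for all $n$) together with symmetry of $\|\cdot\|_E$. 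I would also record $\|x^{\pm}\|_{\mathcal C_E} \le \|x\|_{\mathcal C_E}$, since the singular numbers of $x^\pm$ are dominated by those of $x$.

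Fix $f \in (\mathcal C_E^h)^*$. On the cone I would define the candidate positive part
\[
q(x) = \sup\{ f(y) : y \in \mathcal C_E,\ 0 \le y \le x \}, \qquad x \in \mathcal C_E^+,
\]
and set $q(x) = -\infty$ off the cone. Monotonicity of the norm gives $f(y) \le \|f\|\,\|y\|_{\mathcal C_E} \le \|f\|\,\|x\|_{\mathcal C_E}$ whenever $0 \le y \le x$, so $q$ is finite on $\mathcal C_E^+$ with $q(x) \le \|f\|\,\|x\|_{\mathcal C_E}$; it is plainly positively homogeneous and superadditive (if $0\le y_i \le x_i$ then $0 \le y_1+y_2 \le x_1+x_2$), hence superlinear. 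In parallel I would introduce the sublinear functional
\[
p(x) = \|f\| \inf\{ \|u\|_{\mathcal C_E} : u \in \mathcal C_E,\ u \ge x,\ u \ge 0 \}, \qquad x \in \mathcal C_E^h,
\]
which is finite (take $u = x^+$, so $p(x) \le \|f\|\,\|x^+\|_{\mathcal C_E}$), positively homogeneous, and subadditive. Choosing $y = 0$ and $y = x$ shows $q(x) \ge \max\{0, f(x)\}$ for $x \in \mathcal C_E^+$, while normality forces $p(x) = \|f\|\,\|x\|_{\mathcal C_E} \ge q(x)$ there; off the cone $q = -\infty \le p$. Thus $q \le p$ everywhere.

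By the Hahn--Banach sandwich theorem there is a real-linear $g$ on $\mathcal C_E^h$ with $q \le g \le p$. The bound $g \le p$ gives $g(x) \le \|f\|\,\|x^+\|_{\mathcal C_E}$ and, applied to $-x$, also $g(x) \ge -\|f\|\,\|x^-\|_{\mathcal C_E}$, whence $|g(x)| \le \|f\|\,\|x\|_{\mathcal C_E}$ and $g \in (\mathcal C_E^h)^*$. The bound $q \le g$ gives, for every $x \in \mathcal C_E^+$, both $g(x) \ge 0$ and $g(x) \ge f(x)$, so $g$ and $h := g - f$ are positive continuous functionals and $f = g - h$ is the desired decomposition. \textbf{The main obstacle} I anticipate is precisely the gap between $q$ and linearity: because the operator order on $\mathcal C_E^h$ is not a lattice and fails the Riesz decomposition property (already for $2\times 2$ matrices), $q$ is genuinely only superadditive, and one cannot define $g$ by the naive supremum formula. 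The role of the sandwich theorem, powered by the monotonicity (normality) of the symmetric norm that keeps $q$ finite and squeezed below $p$, is exactly to manufacture an honest linear positive majorant; note that this direct route does not even require the quasi-$(o)$-completeness, which the cited general theorem uses.
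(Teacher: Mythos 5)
Your proof is correct, but it takes a genuinely different route from the paper's. The paper's own proof is essentially a citation: it verifies that $\mathcal{C}_E^h$ is quasi-$(o)$-complete (every sequence $0 \le x_n \le x_{n+1} \le x$ in $\mathcal{C}_E^h$ has a supremum in $\mathcal{C}_E^h$) and then applies Ando's decomposition theorem \cite[Theorem 1]{an} for Banach spaces ordered by a closed generating cone. You instead give a self-contained Hahn--Banach sandwich argument whose only inputs are the two quantitative facts you isolate: monotonicity of the norm on the cone (via the min--max principle, $0 \le y \le x$ gives $s_n(y)\le s_n(x)$, hence $\|y\|_{\mathcal{C}_E}\le\|x\|_{\mathcal{C}_E}$) and the generating decomposition $x = x^+ - x^-$ with $\|x^{\pm}\|_{\mathcal{C}_E}\le\|x\|_{\mathcal{C}_E}$; in effect you reprove, for this concrete space, the Grosberg--Krein theorem that a closed, normal, generating cone has a generating dual cone. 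Your individual steps check out: $q$ is finite, positively homogeneous and superadditive on the cone; your infimum-defined $p$ is sublinear --- a necessary precaution, since the more obvious majorant $\|f\|\,\|x^+\|_{\mathcal{C}_E}$ is not clearly subadditive (the operator inequality $(x+y)^+\le x^+ + y^+$ fails in general, already for $2\times 2$ matrices, consistent with Kadison's anti-lattice theorem); the inequality $q\le p$ holds on and off the cone; and positivity plus continuity of $g$ and of $g-f$ follow exactly as you state. As for what each approach buys: the paper's argument is two lines long but leans on an external theorem and on the order completeness of $\mathcal{C}_E^h$; yours is longer but elementary and quantitative --- it needs neither Ando's theorem nor quasi-$(o)$-completeness, and it produces the decomposition $f = g-(g-f)$ with the norm bounds $\|g\|\le\|f\|$ and $\|g-f\|\le 2\|f\|$.
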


We need the following property  of the weak topology
$\sigma(\mathcal{C}_E, \mathcal{C}_E^{\times})$.

\begin{proposition}\label{p3}
If $x\in \mathcal{C}_E$, then there exists a sequence
$\{x_n\} \subset \mathcal F(\mathcal H)$ such that $x_n\stackrel{\sigma(\mathcal{C}_E,\mathcal{C}_E^{\times})}{\longrightarrow}
x$.
\end{proposition}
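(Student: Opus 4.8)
The plan is to build the approximating finite‑rank sequence from the spectral decomposition and to test weak convergence against every functional of $\mathcal{C}_E^{\times}$ via the order‑continuity criterion of Proposition \ref{p1}. Since $\sigma(\mathcal{C}_E,\mathcal{C}_E^{\times})$ is a locally convex vector topology, weak convergence is preserved under linear combinations, so I would first reduce to the positive case. Because $\mathcal{C}_E$ is a $\ast$‑closed ideal, the real and imaginary parts $\tfrac12(x+x^{\ast})$ and $\tfrac1{2i}(x-x^{\ast})$ lie in $\mathcal{C}_E^{h}$, and by the identity $\mathcal{C}_E^{h}=\mathcal{C}_E^{+}-\mathcal{C}_E^{+}$ each self‑adjoint element splits into a difference of elements of $\mathcal{C}_E^{+}$. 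Hence it suffices to find the required sequence for a fixed $x\in\mathcal{C}_E^{+}$ and then reassemble by linearity; note that a naive truncation of a general Schmidt decomposition will not do, since the tails need not be positive, which is precisely why the passage to positive operators matters.

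For $x\in\mathcal{C}_E^{+}$ I would take the spectral decomposition $x=\sum_{k=1}^{\infty}\lambda_k\langle\cdot,e_k\rangle e_k$ with $\lambda_1\geq\lambda_2\geq\cdots\geq 0$ and $\{e_k\}$ orthonormal, and set $x_n=\sum_{k=1}^{n}\lambda_k\langle\cdot,e_k\rangle e_k\in\mathcal{F}(\mathcal{H})$. Then $x-x_n=\sum_{k>n}\lambda_k\langle\cdot,e_k\rangle e_k\geq 0$, and $(x-x_n)-(x-x_{n+1})=\lambda_{n+1}\langle\cdot,e_{n+1}\rangle e_{n+1}\geq 0$, so the sequence $\{x-x_n\}$ is decreasing in $\mathcal{B}(\mathcal{H})^{h}$. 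Since $\langle(x-x_n)\xi,\xi\rangle=\sum_{k>n}\lambda_k|\langle\xi,e_k\rangle|^{2}\to 0$ for every $\xi\in\mathcal{H}$, the infimum of $\{x-x_n\}$ in $\mathcal{B}(\mathcal{H})^{h}$ is $0$; as $0\in\mathcal{C}_E^{h}$ is then also the greatest lower bound in the inherited order, we get $x-x_n\downarrow 0$ in $\mathcal{C}_E^{h}$.

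Finally, every $\sigma(\mathcal{C}_E,\mathcal{C}_E^{\times})$‑continuous functional is of the form $f_y$ for some $y\in\mathcal{C}_E^{\times}$, and for such $f_y$ Proposition \ref{p1} guarantees $f_y(z_n)\to 0$ whenever $z_n\downarrow 0$. Applying this to $z_n=x-x_n$ gives $f_y(x)-f_y(x_n)=f_y(x-x_n)\to 0$ for all $y\in\mathcal{C}_E^{\times}$, i.e. $x_n\stackrel{\sigma(\mathcal{C}_E,\mathcal{C}_E^{\times})}{\longrightarrow}x$, and the general case follows from the positive one by linearity. The only genuinely delicate point I anticipate is the order‑theoretic bookkeeping — checking that the truncation tails decrease to $0$ in the order of $\mathcal{C}_E^{h}$ (not merely in the strong operator sense) so that Proposition \ref{p1} is applicable. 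It is worth stressing that no estimate involving $\|\cdot\|_{\mathcal{C}_E}$ enters, which is essential because $\mathcal{C}_E$ is only assumed perfect and its norm need not be order continuous.
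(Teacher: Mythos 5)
Your proof is correct and follows essentially the same route as the paper: truncate the spectral decomposition of a positive element, observe that the tails decrease to $0$, and invoke Proposition~\ref{p1} to conclude $\sigma(\mathcal{C}_E,\mathcal{C}_E^{\times})$-convergence of the finite-rank truncations. The only difference is that you make explicit the reduction to the positive case via $\mathcal{C}_E^{h}=\mathcal{C}_E^{+}-\mathcal{C}_E^{+}$ and the order-theoretic check that the tails decrease to $0$ in $\mathcal{C}_E^{h}$, both of which the paper's proof leaves implicit behind the phrase ``it suffices.''
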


\begin{proof} It suffices to establish the validity of Proposition \ref{p3} for
$$0 \leq x = \sum\limits_{j=1}^{\infty} \lambda_j p_j \in \mathcal{C}_E \setminus \mathcal F(\mathcal H),
$$
where $\lambda_j\ge 0$ are the eigenvalues of the compact operator $x$ and $p_j\in \mathcal
F(\mathcal H)$  are  finite-dimensional projectors for all $j\in\mathbb N$ (the
series converges with respect to the norm
$\|\cdot\|_{\infty}$).
 If $y_n =\sum\limits_{j=n}^{\infty} \lambda_j
p_j$, then  $y_n \downarrow 0$ and, by Proposition, \ref{p1} we have $f(y_n)\to 0$ for all $f\in \mathcal{C}_E^{\times}$. Therefore
$$
\mathcal F(\mathcal H) \ni x-y_n\stackrel{\sigma(\mathcal{C}_E,\mathcal{C}_E^{\times})}{\longrightarrow} x.
$$
\end{proof}

Let $T$ be a bounded linear operator acting in a  Banach symmetric ideal  $(\mathcal{C}_E,\|\cdot\|_{\mathcal{C}_E})$,
and let $T^*$ be its adjoint operator.
If  $T^*(\mathcal{C}_E^\times)\subset \mathcal{C}_E^\times$, then
$$
f(T(x_{\alpha}))=T^{*}(f)(x_{\alpha})\to T^{*} (f)(x)=f(T(x))
$$
for every  net  $x_\alpha \stackrel{\sigma(\mathcal{C}_E,\mathcal{C}_E^{\times})}{\longrightarrow}
x , \ x_{\alpha}, x \in \mathcal{C}_E$, and for all $f \in \mathcal{C}_E^\times$.
Thus  $T$ is a $\sigma(\mathcal{C}_E,\mathcal{C}_E^\times)$-continuous operator.

Inversely, if the operator $T$ is a $\sigma(\mathcal{C}_E,\mathcal{C}_E^{\times})$-continuous and
 $f\in \mathcal{C}_E^{\times}$, then the linear functional $(T^*f)(x)=f(T(x))$
is also $\sigma(\mathcal{C}_E,\mathcal{C}_E^{\times})$-continuous, i.e. $T^*(f)\in \mathcal{C}_E^{\times}$.

Therefore, a linear bounded operator $T\colon \mathcal{C}_E\to \mathcal{C}_E$ is
$\sigma(\mathcal{C}_E,\mathcal{C}_E^{\times})$-continuous  if and only if $T^*(f)\in \mathcal{C}_E^{\times}$ for every
$f\in \mathcal{C}_E^{\times}$.
Thus, for any $\lambda\in\mathbb C$ and linear
bounded operators  $T,S:  \mathcal{C}_E \to  \mathcal{C}_E$,
continuous with respect to a $\sigma(\mathcal{C}_E,\mathcal{C}_E^{\times})$-topology, the operators $T+S$, $TS$ and  $\lambda T$ are $\sigma(\mathcal{C}_E,\mathcal{C}_E^{\times})$-continuous.

Let  $\mathcal B(\mathcal{C}_E)$ be the Banach
space of bounded linear operators $T$ acting in $(\mathcal{C}_E, \|\cdot\|_{\mathcal{C}_E})$ with the norm
$$\|T\|_{\mathcal B(\mathcal{C}_E)} = \sup\limits_{\|x\|_{\mathcal{C}_E} \leq 1} \|T(x)\|_{\mathcal{C}_E}.$$

\begin{proposition}\label{p4}
If $T_n\in \mathcal B(\mathcal{C}_E)$ are
$\sigma(\mathcal{C}_E,\mathcal{C}_E^{\times})$-continuous operators, $T\in
\mathcal B(\mathcal{C}_E)$, and $\|T_n-T\|_{\mathcal
B(\mathcal{C}_E)}\to 0$ as $n\to\infty$, then  $T$ is also
$\sigma(\mathcal{C}_E,\mathcal{C}_E^{\times})$-continuous.
\end{proposition}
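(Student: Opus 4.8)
The plan is to reduce everything to the adjoint characterization recorded just before the proposition: a bounded operator $T\colon \mathcal{C}_E\to\mathcal{C}_E$ is $\sigma(\mathcal{C}_E,\mathcal{C}_E^{\times})$-continuous if and only if $T^*(f)\in\mathcal{C}_E^{\times}$ for every $f\in\mathcal{C}_E^{\times}$. Thus it suffices to verify that $T^*$ maps the subspace $\mathcal{C}_E^{\times}$ into itself, and I will obtain this from the corresponding property of the $T_n$ together with a closedness argument.

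First I would pass to adjoints. Since the assignment $S\mapsto S^*$ from $\mathcal B(\mathcal{C}_E)$ into the space of bounded operators on $\mathcal{C}_E^*$ is an isometry, the hypothesis $\|T_n-T\|_{\mathcal B(\mathcal{C}_E)}\to 0$ immediately gives $\|T_n^*-T^*\|\to 0$ as operators on $\mathcal{C}_E^*$. In particular, for each fixed $f\in\mathcal{C}_E^*$ one has the norm convergence $T_n^*(f)\to T^*(f)$ in $\mathcal{C}_E^*$.

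Now fix $f\in\mathcal{C}_E^{\times}$. Because each $T_n$ is $\sigma(\mathcal{C}_E,\mathcal{C}_E^{\times})$-continuous, the characterization above yields $T_n^*(f)\in\mathcal{C}_E^{\times}$ for every $n$, so $\{T_n^*(f)\}$ is a sequence lying in $\mathcal{C}_E^{\times}$ and converging in the norm of $\mathcal{C}_E^*$ to $T^*(f)$. Recalling from the preliminaries that, under the identification $y\leftrightarrow f_y$ with $\|f_y\|_{\mathcal{C}_E^*}=\|y\|_{\mathcal{C}_E^{\times}}$, the space $\mathcal{C}_E^{\times}$ is a closed linear subspace of $\mathcal{C}_E^*$, the norm limit $T^*(f)$ must again belong to $\mathcal{C}_E^{\times}$. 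As $f\in\mathcal{C}_E^{\times}$ was arbitrary, this shows $T^*(\mathcal{C}_E^{\times})\subset\mathcal{C}_E^{\times}$, and hence $T$ is $\sigma(\mathcal{C}_E,\mathcal{C}_E^{\times})$-continuous.

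I do not expect a genuine obstacle here: the argument is the routine interplay between norm convergence and passage to adjoints. The single point I would lean on is the closedness of $\mathcal{C}_E^{\times}$ inside $\mathcal{C}_E^*$, which is exactly the structural fact already established in the excerpt, so it may simply be cited rather than reproved.
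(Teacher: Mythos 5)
Your proposal is correct and follows exactly the paper's own argument: reduce to showing $T^*(\mathcal{C}_E^{\times})\subset\mathcal{C}_E^{\times}$, note that $\|T_n^*(f)-T^*(f)\|_{\mathcal{C}_E^*}\leq\|T_n-T\|_{\mathcal B(\mathcal{C}_E)}\|f\|_{\mathcal{C}_E^*}\to 0$, and invoke the closedness of $\mathcal{C}_E^{\times}$ in $(\mathcal{C}_E^*,\|\cdot\|_{\mathcal{C}_E^*})$ to conclude that the norm limit $T^*(f)$ stays in $\mathcal{C}_E^{\times}$. There is nothing to add; the point you flag as the one to lean on (closedness of $\mathcal{C}_E^{\times}$) is precisely what the paper leans on as well.
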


\begin{proof} It suffices to show that $T^*(f)\in \mathcal{C}_E^{\times}$ for any functional $f\in \mathcal{C}_E^{\times}$.
Since $T_n\in \mathcal B(\mathcal{C}_E)$ are
$\sigma(\mathcal{C}_E,\mathcal{C}_E^{\times})$-continuous operators, it follows that \
$T_n^*(f)\in \mathcal{C}_E^{\times}$ \ for all  \ $f\in
\mathcal{C}_E^{\times}, \ n \in \mathbb N$. Considering the  closed subspace
$\mathcal{C}_E^{\times}$  in $(\mathcal{C}_E^*,\|\cdot\|_{\mathcal{C}_E^*})$ and noting that
$$
\|T_n^*(f)-T^*(f)\|_{\mathcal{C}_E^*}\leq \|T_n^*-T^*\|_{\mathcal B(\mathcal{C}_E^*)}
\|f\|_{\mathcal{C}_E^*}=\|T_n-T\|_{\mathcal B(\mathcal{C}_E)}\|f\|_{\mathcal{C}_E^*}\to 0,
$$
we conclude that $T^*(f)\in \mathcal{C}_E^{\times}$.
\end{proof}

We also need the following well-known properties of  perfect symmetrically normed ideals.

\begin{theorem}\label{t3}
Let $(\mathcal{C}_E, \|\cdot\|_{\mathcal{C}_E}) \subset c_0$ be a Banach symmetric sequence space with Fatou property. Then

$(i)$. \cite[Theorem 5.11]{DDP1}.
$
\|x\|_{ \mathcal{C}_E} = \sup\limits_{y\in
\mathcal{C}_E^{\times}, \|y\|_{\mathcal{C}_E^{\times}} \leq 1} |tr(xy)|
$
\ for every  \ $x\in  \mathcal{C}_E$;

$(ii)$. \cite[Theorem 3.5]{dodds}. A Banach space  $\mathcal{C}_E$ is
$\sigma(\mathcal{C}_E,\mathcal{C}_E^{\times})$-sequentially
complete, i.e. if for $x_n \in \mathcal{C}_E, \ n \in \mathbb N$, and for every $f\in \mathcal{C}_E^{\times}$ there is $\lim\limits_{n\to\infty} f(x_n)$, then there exists $x\in
\mathcal{C}_E$ such that
$x_n\stackrel{\sigma(\mathcal{C}_E,\mathcal{C}_E^{\times})}{\longrightarrow} x$.
\end{theorem}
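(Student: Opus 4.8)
The plan is to treat the two assertions separately: I would obtain $(i)$ by a Köthe–bidual duality argument and $(ii)$ by combining $(i)$ with a Banach–Steinhaus argument and the order-continuity criterion of Proposition \ref{p1}.

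For $(i)$, I would first dispatch the easy inequality. Using von Neumann's trace inequality $|tr(xy)| \le \sum_{n=1}^{\infty} s_n(x) s_n(y)$ together with the Hölder-type estimate $\sum_n s_n(x) s_n(y) \le \|x\|_{\mathcal{C}_E}\|y\|_{\mathcal{C}_{E^{\times}}}$ coming from the definition of the Köthe dual norm on $E$, and recalling the identification $(\mathcal{C}_E^{\times},\|\cdot\|_{\mathcal{C}_E^{\times}}) = (\mathcal{C}_{E^{\times}},\|\cdot\|_{\mathcal{C}_{E^{\times}}})$ recorded above, one gets $\sup\{|tr(xy)| : \|y\|_{\mathcal{C}_E^{\times}}\le 1\} \le \|x\|_{\mathcal{C}_E}$. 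For the reverse inequality I would exploit the Fatou property, which by the cited result yields $E = E^{\times\times}$ with equality of norms, i.e. $\|s(x)\|_E = \sup\{\sum_n s_n(x) b_n : b \ge 0,\ \|b\|_{E^{\times}} \le 1\}$, where the optimal $b$ may be taken nonincreasing. Fixing a Schmidt decomposition $x = \sum_n s_n(x)\langle\,\cdot\,,e_n\rangle f_n$ and a near-optimal $b$, I would set $y = \sum_n b_n\langle\,\cdot\,,f_n\rangle e_n$; a direct computation gives $xy = \sum_n s_n(x) b_n\langle\,\cdot\,,f_n\rangle f_n$, hence $tr(xy) = \sum_n s_n(x) b_n$, while $s_n(y) = b_n^{*}$, so $\|y\|_{\mathcal{C}_E^{\times}} = \|b\|_{E^{\times}} \le 1$. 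Taking the supremum over $b$ recovers $\|x\|_{\mathcal{C}_E}$. The only delicate point is the alignment of the singular vectors so that the trace pairing decouples into $\sum_n s_n(x) b_n$, which is routine once the Schmidt decomposition is fixed.

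For $(ii)$, suppose $\lim_n tr(x_n y)$ exists for every $y \in \mathcal{C}_E^{\times}$. Viewing each $x_n$ as the functional $y \mapsto tr(x_n y)$ on the Banach space $\mathcal{C}_E^{\times}$, part $(i)$ identifies its dual norm with $\|x_n\|_{\mathcal{C}_E}$, so Banach–Steinhaus gives $M := \sup_n \|x_n\|_{\mathcal{C}_E} < \infty$. Consequently $\phi(y) := \lim_n tr(x_n y)$ is a well-defined element of $(\mathcal{C}_E^{\times})^{*}$ with $\|\phi\| \le M$. It remains to represent $\phi$ by some $x \in \mathcal{C}_E$; since $\mathcal{C}_E$ is perfect we have $\mathcal{C}_E = (\mathcal{C}_E^{\times})^{\times}$, so by Proposition \ref{p1} applied to the ideal $\mathcal{C}_E^{\times}$ (whose Köthe dual is $\mathcal{C}_E$) it suffices to check that $\phi$ is order continuous, i.e. $\phi(y_k) \to 0$ whenever $y_k \downarrow 0$ in $\mathcal{C}_E^{\times}$. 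Granting this, the representing operator $x \in \mathcal{C}_E$ satisfies $tr(x_n y) \to tr(xy)$ for every $y$, which is precisely $x_n \stackrel{\sigma(\mathcal{C}_E,\mathcal{C}_E^{\times})}{\longrightarrow} x$.

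The main obstacle is exactly the order continuity of $\phi$. For each fixed $n$ the operator $x_n$ lies in $\mathcal{C}_E = (\mathcal{C}_E^{\times})^{\times}$, so $tr(x_n y_k) \to 0$ as $k \to \infty$ by Proposition \ref{p1}; but $\phi(y_k) = \lim_n tr(x_n y_k)$ requires interchanging the limit in $n$ with the limit in $k$, and pointwise limits of order-continuous functionals need not be order continuous. I expect this step to demand a uniform equi-order-continuity estimate of Vitali–Hahn–Saks / Phillips type, controlling $tr(x_n y_k)$ uniformly in $n$ over the tails $y_k$. This is the technical core of the Dodds–Dodds–de Pagter theory, and the reason the statement is quoted from \cite{dodds} rather than reproven here.
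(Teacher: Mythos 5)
The paper offers no proof of Theorem~\ref{t3} at all: both parts are quoted from the literature, $(i)$ from \cite[Theorem 5.11]{DDP1} and $(ii)$ from \cite[Theorem 3.5]{dodds}, so there is no internal argument to compare yours against. Judged on its own terms, your proof of part $(i)$ is complete and correct, and is essentially the standard duality argument of the cited source: the inequality $\sup\{|tr(xy)|:\|y\|_{\mathcal{C}_E^{\times}}\leq 1\}\leq\|x\|_{\mathcal{C}_E}$ follows from the von Neumann--Horn trace inequality plus the definition of the K\"{o}the dual norm, and the reverse inequality follows from the isometric identity $E=E^{\times\times}$ (equivalent to the Fatou property) together with your choice $y=\sum_n b_n\langle\,\cdot\,,f_n\rangle e_n$, which indeed yields $tr(xy)=\sum_n s_n(x)b_n$ and $s_n(y)=b_n$ once $b$ is taken nonincreasing (legitimate by Hardy--Littlewood rearrangement, since $E^{\times}$ is symmetric).

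Part $(ii)$, however, is a reduction rather than a proof, and the step you leave open is precisely the content of the theorem. Your setup is sound: Banach--Steinhaus together with part $(i)$ gives $\sup_n\|x_n\|_{\mathcal{C}_E}<\infty$ and a limit functional $\phi\in(\mathcal{C}_E^{\times})^{*}$, and by perfectness plus Proposition~\ref{p1} (applied to the ideal $\mathcal{C}_{E^{\times}}$) representing $\phi$ by an element of $\mathcal{C}_E$ amounts to showing $\phi$ is order continuous. But order continuity of $\phi$ requires interchanging $\lim_n$ with the limit along $y_k\downarrow 0$, and a pointwise limit of order-continuous functionals need not be order continuous; one needs exactly the Vitali--Hahn--Saks/Phillips-type equi-order-continuity that you name but do not establish. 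That uniformity lemma is the substance of Dodds--Lennard's theorem, so your write-up of $(ii)$ ultimately defers to the same citation the paper uses; as a free-standing proof it has a genuine gap. A secondary point: your appeal to Proposition~\ref{p1} for $\mathcal{C}_E^{\times}$ requires $E^{\times}\subset c_0$, which holds only when $E\neq l_1$; for $E=l_1$ (which does have the Fatou property) one has $\mathcal{C}_E^{\times}=\mathcal{B}(\mathcal{H})$, and the representation step must instead use the fact that order-continuous (normal) functionals on $\mathcal{B}(\mathcal{H})$ are exactly those induced by trace-class operators, equivalently the weak sequential completeness of $\mathcal{C}_1$.
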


\section{The ball topology in  norm  ideals of compact operators}

Let  $(X,\|\cdot\|_X)$ be a real Banach space and let $b_X$ be the ball topology in  $X$, i.e. $b_X$ is the coarsest topology
 such that every closed ball $$B(x,\varepsilon) =\{y \in X: \|y-x\|_X\leq \varepsilon\}, \ \varepsilon > 0,$$
 is  closed in  $b_X$ \cite{godefroykalton}. The family
$$
X \setminus \bigcup_{i=1}^n B(x_i,\varepsilon_i), \ x_i \in X, \ \|x_0-x\|_X > \varepsilon_i, \ i=1,...,n, \ n \in \mathbb N.
$$
is a base of neighborhoods of the point $x_0 \in X$ in $b_X$.
Therefore   $x_\alpha\stackrel{b_X}{\longrightarrow} x, \ x_\alpha, x \in X$, if and only if  $\lim\inf
\|x_\alpha-x\|_X\geq \|x-y\|_X$ for all $y\in X$ \cite{godefroykalton}. In particular,  every surjective isometry   $V$
in $(X,\|\cdot\|_X)$ is continuous with respect to the ball topology $b_X$.

Let us note that $b_X$ is not a Hausdorff topology. The following theorem provides   a sufficient condition for $T_2$-axiom of
$b_X$ on subsets of $X$.  Recall that $A$ is a Rosenthal subset of $(X,\|\cdot\|_X)$   if every sequence in $A$ has a weakly Cauchy subsequence.

\begin{theorem}\cite[Theorem 3.3]{godefroykalton}.\label{t4}
Let  $(X,\|\cdot\|_X)$ be a real Banach space, and let  $A$ be a bounded absolutely convex Rosenthal subset of $X$. Then $(A, b_X)$ is a Hausdorff space.
\end{theorem}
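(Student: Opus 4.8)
\emph{Proof idea.} First I would reduce the Hausdorff property to a covering statement. Working in the subspace topology on $A$, two basic neighbourhoods $X\setminus\bigcup_{i=1}^{n}B(a_i,r_i)$ of $x$ (so $\|x-a_i\|_X>r_i$) and $X\setminus\bigcup_{j=1}^{m}B(b_j,s_j)$ of $y$ (so $\|y-b_j\|_X>s_j$) have intersection disjoint from $A$ precisely when $A\subset\bigcup_i B(a_i,r_i)\cup\bigcup_j B(b_j,s_j)$. Hence $(A,b_X)$ is Hausdorff if and only if for every pair $x\ne y$ in $A$ there is a finite family of closed balls covering $A$, no member of which contains both $x$ and $y$: given such a cover, colour each ball by a point ($x$ or $y$) that it misses and pass to complements; conversely a pair of disjoint neighbourhoods yields such a cover. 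The point of restricting to $A$ is that, $A$ being bounded, finitely many balls can cover it, whereas they can never cover the unbounded space $X$, which is exactly why $b_X$ fails to be Hausdorff on all of $X$.

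Next I would record the relevant geometry. Every closed ball is convex and norm-closed, hence weakly closed, so $b_X$ is coarser than the weak topology on $A$; in particular it suffices to separate the points $x,y$, which lie in $A\subset X$. Moreover any ball of radius $<\tfrac12\|x-y\|_X$ automatically misses one of $x,y$ (otherwise $\|x-y\|_X\le 2r$), so the only obstruction to the required cover comes from the large balls, which to be admissible must be positioned off-centre so as to exclude a whole region around $x$ or around $y$. Finally, since $A$ is bounded, its weak$^\ast$ closure $K:=\overline{A}^{\,\sigma(X^{\ast\ast},X^{\ast})}\subset X^{\ast\ast}$ is weak$^\ast$-compact by Banach--Alaoglu, and because $A$ is Rosenthal, $K$ is a Rosenthal (hence angelic) compact: every point of $K$ is the weak$^\ast$-limit of a sequence from $A$, and a sequence in $X$ is weakly Cauchy exactly when it converges in $\sigma(X^{\ast\ast},X^{\ast})$.

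I would then argue by contraposition. Assume that for some $x\ne y$ in $A$ no finite family of separating balls covers $A$. Then the relatively weak$^\ast$-open sets $A\cap\{\,\xi:\|\xi-z\|>r\,\}$, taken over all separating balls $B(z,r)$, have the finite intersection property, so passing to weak$^\ast$-closures inside the compact set $K$ produces a common cluster point $\xi_0\in K$. Using the angelic structure of $K$ together with the absolute convexity of $A$ (which lets me symmetrize and pass to differences $a_{2k}-a_{2k-1}$), I would convert this obstruction into a sequence $(a_k)\subset A$ that has \emph{no} weakly Cauchy subsequence; by Rosenthal's $l_1$-theorem such a sequence is equivalent to the unit vector basis of $l_1$. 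This contradicts the hypothesis that $A$ is a Rosenthal set, and the contradiction establishes the separating cover, hence the Hausdorff property.

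The main obstacle is precisely this last extraction. Because $A$ is only Rosenthal and need not be weakly (nor weak$^\ast$-relatively) compact, one cannot simply pull out a convergent subsequence and read off a contradiction; the delicate work is to turn the purely covering-theoretic failure into genuine $l_1$-behaviour of a concrete sequence in $A$, controlling at each inductive step both the separating balls used to trap the previously chosen points and the functionals witnessing the $l_1$-estimate. This is where Rosenthal's subsequence dichotomy and the absolute convexity of $A$ are essential; the reduction of the first paragraph and the weak$^\ast$-compactness of the second are comparatively routine.
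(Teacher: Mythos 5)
First, a point of reference: the paper does not prove this statement at all --- it is quoted verbatim from Godefroy--Kalton \cite[Theorem 3.3]{godefroykalton} and used as a black box (its only role is in the proof of Theorem \ref{t5}). So there is no internal proof to compare your attempt against; your proposal has to stand on its own as a proof of the Godefroy--Kalton theorem.

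On its own terms, your first paragraph is correct and is the right starting point: on a subset $A$, the Hausdorff property of $b_X$ is exactly the statement that any two distinct points $x,y\in A$ admit a finite cover of $A$ by closed balls, each ball missing $x$ or missing $y$; your observation that only balls of radius at least $\tfrac12\|x-y\|_X$ are problematic is also correct. But after this formal reduction there is a genuine gap where the entire mathematical content of the theorem should be. Producing the weak$^{\ast}$ cluster point $\xi_0\in K$ by compactness is easy and, by itself, yields no contradiction: closed balls are not weak$^{\ast}$ neighbourhoods, so there is nothing absurd about $\xi_0$ lying in the weak$^{\ast}$ closure of $A\setminus B$ for every separating ball $B$. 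The step you describe as ``convert this obstruction into a sequence $(a_k)\subset A$ with no weakly Cauchy subsequence'' \emph{is} the theorem; you never say how the inductive construction goes, which balls and functionals are chosen at each step, or where absolute convexity actually enters beyond the word ``symmetrize.'' You concede this yourself in the last paragraph (``the main obstacle is precisely this last extraction''), which amounts to admitting that the proof is missing its core. Two further remarks: the appeal to Rosenthal's $l_1$-theorem is redundant, since a sequence in $A$ with no weakly Cauchy subsequence already contradicts the hypothesis that $A$ is Rosenthal --- you never need it to be equivalent to the unit vector basis of $l_1$; and the claim that every point of $K$ is the weak$^{\ast}$ limit of a sequence from $A$ is not free --- it is the Bourgain--Fremlin--Talagrand angelicity theorem, which would have to be invoked explicitly and is arguably heavier machinery than the statement being proved.
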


In the proof of required properties of the ball topology (see
Theorem \ref {t5} below), we utilize the following well-known proposition.

\begin{proposition}\cite[section 3, Ch. 1, \S 5]{polya}. \label{p5}
Let $p_{nk}$ be real numbers, $n,k\in\mathbb N$, such that
$\sum\limits_{k=1}^n |p_{nk}| = 1$ for all $n\in\mathbb N$, and  let the limit $\lim\limits_{n\to\infty} p_{nk}=p_k$
exist for every fixed $k \in\mathbb N$. Then the sequence
$$
s_n=p_{n1}r_1+p_{n2}r_2+\ldots + p_{nn}r_n
$$
converges for every convergent sequence  $\{r_n\}$.
\end{proposition}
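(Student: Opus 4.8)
The plan is to reduce the assertion to the classical Schur--Toeplitz mechanism and then to isolate the one genuinely delicate point. First I would extract the two structural facts encoded in the hypotheses. Since $\sum_{k=1}^{N} |p_{nk}| \le \sum_{k=1}^{n} |p_{nk}| = 1$ for every $n \ge N$, letting $n \to \infty$ and using $p_{nk} \to p_k$ yields $\sum_{k=1}^{N} |p_k| \le 1$ for all $N$, hence $(p_k)_{k} \in l_1$ with $\sum_{k} |p_k| \le 1$. The second fact is that the rows have uniformly bounded $l_1$-norm (in fact norm exactly $1$); this uniformity is what powers every estimate below.

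Next, writing $r = \lim_k r_k$ and $t_k = r_k - r$ (so $t_k \to 0$), I would split
\[
s_n = \sum_{k=1}^{n} p_{nk} r_k = r\,\sigma_n + \tau_n, \qquad \sigma_n := \sum_{k=1}^{n} p_{nk}, \quad \tau_n := \sum_{k=1}^{n} p_{nk}\, t_k .
\]
The term $\tau_n$ converges, and establishing this uses only the stated hypotheses: given $\varepsilon > 0$, choose $K$ with $\sup_{k > K} |t_k| < \varepsilon$ and $\sum_{k > K} |p_k| < \varepsilon$, which is possible since $t_k \to 0$ and $(p_k) \in l_1$. Splitting $\tau_n$ at the index $K$, the finite head $\sum_{k \le K} p_{nk} t_k \to \sum_{k \le K} p_k t_k$ by the pointwise limits, while the tail is dominated uniformly in $n$ by $\sup_{k > K} |t_k| \cdot \sum_{k=1}^{n} |p_{nk}| = \sup_{k > K} |t_k| < \varepsilon$. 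Hence $\tau_n \to \sum_{k=1}^{\infty} p_k t_k$.

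The main obstacle is the remaining term $r\,\sigma_n$, whose convergence (for $r \ne 0$) is equivalent to convergence of the row sums $\sigma_n = \sum_{k=1}^{n} p_{nk}$; I expect this to be the heart of the matter. Two observations pin it down. It is \emph{necessary}: applying the asserted conclusion to the constant sequence $r_k \equiv 1$ gives $s_n = \sigma_n$, so the row sums must converge. It is also not automatic from the pointwise limits together with the normalization $\sum_k |p_{nk}| = 1$: the array with $p_{nn} = (-1)^n$ and $p_{nk} = 0$ for $k \ne n$ satisfies both, with $p_k \equiv 0$, yet $\sigma_n = (-1)^n$ diverges. In the settings where the proposition is applied the row sums converge for free --- for instance when the entries are nonnegative, since then $\sigma_n = \sum_k |p_{nk}| = 1$ --- and this convergence is exactly the additional datum on which the conclusion rests. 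Granting $\sigma_n \to s$, we obtain $r\,\sigma_n \to r s$ and therefore
\[
s_n \longrightarrow r s + \sum_{k=1}^{\infty} p_k t_k = \sum_{k=1}^{\infty} p_k r_k + r\Bigl(s - \sum_{k=1}^{\infty} p_k\Bigr),
\]
which completes the argument.
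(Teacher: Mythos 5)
Your analysis is sound, and it uncovers a genuine defect: Proposition~5 as printed is \emph{false} in the literal form stated, for exactly the reason you isolate. Your counterexample ($p_{nn}=(-1)^n$, all other entries zero) satisfies both stated hypotheses with $p_k\equiv 0$, yet $s_n=(-1)^n$ diverges for $r_k\equiv 1$. Your positive part is also complete: the deduction $\sum_{k}|p_k|\le 1$ from the uniform row normalization, the decomposition $s_n=r\sigma_n+\tau_n$, and the head/tail estimate showing $\tau_n\to\sum_k p_k t_k$ constitute the standard Kojima--Schur argument, and your observation that convergence of the row sums $\sigma_n=\sum_{k=1}^n p_{nk}$ is necessary (take $r_k\equiv 1$) pins down precisely what is missing. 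There is no proof in the paper to compare against --- the proposition is quoted from P\'olya--Szeg\H{o} --- and the results there carry hypotheses under which the row sums converge automatically (nonnegative entries summing to $1$ in the Toeplitz problem, or explicit convergence of $\sum_k p_{nk}$ in the general matrix-transform theorem); in transcribing the statement with signed entries and only $\sum_{k=1}^n|p_{nk}|=1$, the paper dropped exactly the condition your argument shows cannot be dispensed with.

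One correction to your closing remark, which matters for this paper: in the one place Proposition~5 is invoked (the proof of Theorem~5 on the ball topology), the coefficients $p_{ni}$ come from an \emph{absolutely} convex hull, so they may be negative and the row sums do not converge ``for free.'' The application is nonetheless repairable along the lines your decomposition suggests: the row sums lie in $[-1,1]$, so after the subsequence extraction that makes the entries $p_{n_i k}$ converge, one may pass to a further subsequence along which $\sigma_{n_i}$ converges as well --- a choice made once, independently of the functional $f$ --- and then your argument yields convergence of $f(y_{n_i})$ for every $f\in(\mathcal{C}_E^h)^*$, which is all that the proof of Theorem~5 requires.
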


\begin{theorem}\label{t5}
Let $(\mathcal{C}_E, \|\cdot\|_{\mathcal{C}_E})$ be a  Banach symmetric ideal, $x_n\in \mathcal{C}_E^+$, and let
$x_n\downarrow 0$. Then the sequence  $\{x_n\}$ can converge with respect to the topology $b_{\mathcal{C}_E}$
to no more than one element.
\end{theorem}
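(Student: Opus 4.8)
The plan is to derive the uniqueness of the $b_{\mathcal{C}_E}$-limit from the Hausdorffness criterion of Theorem \ref{t4}. Suppose the sequence $\{x_n\}$ converges in $b_{\mathcal{C}_E}$ to two elements $a,b\in\mathcal{C}_E$. I would introduce the norm-closed absolutely convex hull
$$
A=\overline{\mathrm{aco}}\bigl(\{x_n:n\in\mathbb N\}\cup\{a,b\}\bigr).
$$
Since $0\le x_n\le x_1$, the set $\{x_n\}$ is norm bounded, and adjoining the two points $a,b$ keeps $A$ bounded and absolutely convex. If I can show that $A$ is a Rosenthal subset of $\mathcal{C}_E$, then by Theorem \ref{t4} the space $(A,b_{\mathcal{C}_E})$ is Hausdorff; as $x_n$, $a$ and $b$ all lie in $A$ and $x_n\to a$, $x_n\to b$ in $b_{\mathcal{C}_E}$, the limits in the (Hausdorff) subspace topology on $A$ must coincide, i.e. $a=b$.

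The input that powers everything is the convergence of $f(x_n)$ for each fixed $f\in\mathcal{C}_E^*$. I would first check this on the real part $\mathcal{C}_E^h$: by Proposition \ref{p2} every functional in $(\mathcal{C}_E^h)^*$ is a difference of two positive functionals, and for a positive functional $g$ the scalar sequence $g(x_n)$ is non-increasing (because $x_n-x_{n+1}\ge 0$) and bounded below by $0$, hence convergent. Decomposing an arbitrary $f\in\mathcal{C}_E^*$ into its real and imaginary parts on the self-adjoint elements $x_n=x_n^{*}$, I conclude that $\lim_n f(x_n)$ exists for every $f\in\mathcal{C}_E^*$; in particular $\{x_n\}$ is weakly Cauchy.

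The heart of the argument is to promote this to the Rosenthal property of $A$, and this is where Proposition \ref{p5} enters. It suffices to treat sequences drawn from the algebraic hull $\mathrm{aco}(\{x_n\}\cup\{a,b\})$, the closure being handled by a routine norm-perturbation (a norm-null perturbation of a weakly Cauchy sequence is again weakly Cauchy). So let $w_j=\alpha_j a+\beta_j b+\sum_k\gamma_{jk}x_k$ with $|\alpha_j|+|\beta_j|+\sum_k|\gamma_{jk}|\le 1$. As all coefficients are bounded, a diagonal extraction yields a subsequence along which $\alpha_j$, $\beta_j$ and each $\gamma_{jk}$ converge. Crucially, this subsequence is selected from the coefficients alone; then for every single $f\in\mathcal{C}_E^*$ the numbers $r_k=f(x_k)$ form a convergent sequence, so Proposition \ref{p5} (in the immediate variant allowing absolute row sums at most $1$, with column limits guaranteed by the extraction) shows that $\sum_k\gamma_{jk}f(x_k)$ converges along the subsequence, while the two leading terms converge because $\alpha_j,\beta_j$ do. Hence $f(w_j)$ converges along one and the same subsequence for all $f$, so $\{w_j\}$ has a weakly Cauchy subsequence and $A$ is Rosenthal.

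The main obstacle, and the reason Proposition \ref{p5} is indispensable, is precisely this uniformity in $f$: a naive diagonalization would have to range over the uncountable dual $\mathcal{C}_E^*$. The Toeplitz-type Proposition \ref{p5} circumvents it by letting me fix the subsequence from the countable data $\{\gamma_{jk}\}$ and then read off convergence of $f(w_j)$ for every convergent scalar sequence $\{f(x_k)\}$ simultaneously. Once $A$ is established as a bounded absolutely convex Rosenthal set, Theorem \ref{t4} closes the argument. The remaining points—boundedness and absolute convexity of $A$, and the passage from the algebraic hull to its norm closure—are routine, and I would dispatch them briefly.
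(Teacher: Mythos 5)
Your argument is correct and runs on the same engine as the paper's proof: weak Cauchyness of $\{x_n\}$ via the positive-functional decomposition of Proposition \ref{p2}, an absolutely convex hull whose Rosenthal property is obtained by extracting one subsequence from the countable coefficient data (compactness of $\prod_{i=1}^{\infty}[-1,1]$) and then invoking the Toeplitz-type Proposition \ref{p5} so that $f(w_j)$ converges along that single subsequence for every $f$ simultaneously, and finally Theorem \ref{t4}. The genuine difference is the choice of the set $A$: the paper takes the absolutely convex hull of $\{0,x_1,x_2,\dots\}$ alone, and concludes Hausdorffness of $(A,b_{\mathcal{C}_E^h})$; since Hausdorffness of the subspace topology only separates points \emph{of} $A$, this literally yields uniqueness of $b$-limits lying in $A$, while the theorem (and its later use in Proposition \ref{p8}, where the limit $x_0$ is not known to belong to $A$) concerns limits that are arbitrary elements of $\mathcal{C}_E$. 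By adjoining the two candidate limits $a,b$ to the generating set, you make the Hausdorffness argument apply to them verbatim, so your version is the more complete one; the price is that $a,b$ need not be self-adjoint, so you are forced to work in all of $\mathcal{C}_E$ viewed as a real space and to check weak Cauchyness against the full dual (which you do via real and imaginary parts), whereas the paper stays inside $\mathcal{C}_E^h$ and finishes by comparing $b_{\mathcal{C}_E}|_{\mathcal{C}_E^h}$ with $b_{\mathcal{C}_E^h}$ --- a step your route makes unnecessary. Two minor points: taking the norm-closed hull buys you nothing, since Theorem \ref{t4} does not require closedness, so the algebraic hull of $\{x_n\}\cup\{a,b\}$ suffices and the perturbation step can be dropped; and your ``row sums at most $1$'' variant of Proposition \ref{p5} should be justified the way the paper implicitly does by including $0$ in the generating set, i.e.\ pad each combination with the element $0\in A$ carrying the deficit coefficient $1-|\alpha_j|-|\beta_j|-\sum_k|\gamma_{jk}|$ (extracting once more so this dummy coefficient converges), so that the absolute row sums equal $1$ exactly, as the proposition is stated.
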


\begin{proof} Consider $\mathcal{C}_E$ as a real Banach space. Let $A$ be the absolutely convex hull of the sequence
$\{0, x_1, ..., x_n,...\}$. Since  $x_n\downarrow 0$ and the norm $\|\cdot\|_{\mathcal{C}_E}$ is monotone,
it follows that $A$ is a bounded subset of  $\mathcal{C}_E$. Let us show that  $A$ is a Rosenthal subset.

Using the inequality   $0 \leq x_{n+1} \leq x_n$ and a decomposition of
linear functional $f\in (\mathcal{C}_E^h)^*$ as the difference
of two positive functionals from $(\mathcal{C}_E^h)^*$  (see Proposition \ref{p2}), we conclude
that there exists $\lim\limits_{n\to\infty} f(x_n)=r_n$  for every functional
$f\in (\mathcal{C}_E^h)^*$.
For any sequence $\{y_n\}_{n=1}^{\infty}\subset A$, we have
$$
y_n=p_{n1}x_1+p_{n2}x_2+\ldots +p_{nk(n)}x_{k(n)}, \ \ \text{where}
\ \ \sum\limits_{i=1}^{k(n)} |p_{ni}| = 1;
$$
in particular, $p_{ni}\in [-1,1]$ for all $i\in 1,\ldots , k(n)$. Consider a sequence
$$
q_n=(p_{n1},p_{n2}\ldots , p_{nk(n)}, 0,0\ldots )\in
\prod\limits_{i=1}^{\infty} [-1,1].
$$
By Tychonoff Theorem,  a set
$\prod\limits_{i=1}^{\infty} [-1,1]$ is compact with respect to the product
topology. Moreover, by \cite[ch. 4, theorem 17]{kelli}, the
set $\prod\limits_{i=1}^{\infty} [-1,1]$ is a metrizable compact. Hence, the sequence
$\{q_n\}$ has a convergent subsequence
$\{q_{n_i}\}_{i=1}^{\infty}$. In particular, there are the limits
$\lim\limits_{n_i\to\infty} p_{n_i k}$ for all $k\in\mathbb N$. Besides, the sequence $f(y_{n_i})=\sum\limits_{j=1}^k
p_{{n_i}j} f(x_j), \ f\in  (\mathcal{C}_E^h)^*$,  satisfies all conditions of
Proposition \ref{p5},  which implies its convergence.
Therefore, the sequence
$\{y_n\}_{n=1}^{\infty}\subset A$  has a weakly Cauchy subsequence $\{y_{n_i}\}$.
This means that $A$ is a Rosenthal subset  of  $(\mathcal{C}_E^h, \|\cdot\|_{\mathcal{C}_E})$.

By Theorem \ref{t4}, the topological space $(A,b_{\mathcal{C}_E^h})$ is
Hausdorff, hence the sequence  $\{x_n\}_{n=1}^{\infty}\subset
A$ could not have more than one limit with respect to the topology $b_{\mathcal{C}_E^h}$.
Since $\mathcal{C}_E^h$ is a closed subspace in $(\mathcal{C}_E, \|\cdot\|_{\mathcal{C}_E})$ (we assume that $\mathcal{C}_E$ is a real space), it follows that the restriction $(b_{\mathcal{C}_E}) | _{\mathcal{C}_E^h} $ is finer than $b_{\mathcal{C}_E^h}$. Therefore, the sequence $\{x_n\}_{n=1}^{\infty}$  can have
no more than one limit with respect to the topology $b_{\mathcal{C}_E}$.
\end{proof}

\begin{proposition}\label{p6}
If $E \subset c_0$ is a  Banach symmetric sequence space  with Fatou property,
 then $b_{\mathcal{C}_E} \leq \sigma(\mathcal{C}_E,\mathcal{C}_E^{\times})$.
\end{proposition}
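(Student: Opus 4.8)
The plan is to exploit the defining property of the ball topology recorded at the start of this section: $b_{\mathcal{C}_E}$ is the \emph{coarsest} topology on $\mathcal{C}_E$ for which every closed ball $B(x,\varepsilon)$ is closed. Consequently, to establish the inclusion $b_{\mathcal{C}_E}\leq \sigma(\mathcal{C}_E,\mathcal{C}_E^{\times})$ it suffices to verify that each closed ball $B(x,\varepsilon)$ is itself $\sigma(\mathcal{C}_E,\mathcal{C}_E^{\times})$-closed. Indeed, if every generating closed set of $b_{\mathcal{C}_E}$ is already closed in the weak topology, then every $b_{\mathcal{C}_E}$-closed set is $\sigma(\mathcal{C}_E,\mathcal{C}_E^{\times})$-closed, i.e. the weak topology is finer, which is precisely the asserted inequality.

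First I would invoke Theorem \ref{t3}$(i)$, whose hypothesis is met since $E$ has the Fatou property, to represent the norm as
\[
\|z\|_{\mathcal{C}_E}=\sup\{\,|tr(zy)|:\ y\in\mathcal{C}_E^{\times},\ \|y\|_{\mathcal{C}_E^{\times}}\leq 1\,\}, \qquad z\in\mathcal{C}_E .
\]
For each fixed $y\in\mathcal{C}_E^{\times}$ the functional $f_y(z)=tr(zy)$ is, by the very definition of $\mathcal{C}_E^{\times}\subset\mathcal{C}_E^{*}$, continuous for $\sigma(\mathcal{C}_E,\mathcal{C}_E^{\times})$, and hence so is the affine map $z\mapsto tr((z-x)y)$. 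Using the displayed formula applied to $z-x$, I would then rewrite the ball as the intersection
\[
B(x,\varepsilon)=\bigcap_{y\in\mathcal{C}_E^{\times},\ \|y\|_{\mathcal{C}_E^{\times}}\leq 1}\bigl\{\,z\in\mathcal{C}_E:\ |tr((z-x)y)|\leq\varepsilon\,\bigr\},
\]
the equality being just a restatement of the equivalence $\|z-x\|_{\mathcal{C}_E}\leq\varepsilon \Leftrightarrow |tr((z-x)y)|\leq\varepsilon$ for all admissible $y$.

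Finally, each set $\{z:\ |tr((z-x)y)|\leq\varepsilon\}$ is the preimage of the closed interval $[-\varepsilon,\varepsilon]$ under a $\sigma(\mathcal{C}_E,\mathcal{C}_E^{\times})$-continuous real function, hence $\sigma(\mathcal{C}_E,\mathcal{C}_E^{\times})$-closed; an arbitrary intersection of closed sets is closed, so $B(x,\varepsilon)$ is $\sigma(\mathcal{C}_E,\mathcal{C}_E^{\times})$-closed, which completes the argument. The only genuinely essential ingredient is the norm representation of Theorem \ref{t3}$(i)$ — equivalently, the weak lower semicontinuity of $\|\cdot\|_{\mathcal{C}_E}$ — and this is exactly the point at which the Fatou (perfectness) hypothesis on $E$ is consumed; the reduction to balls and the intersection argument are purely formal. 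I therefore expect no substantial obstacle beyond correctly isolating that the closed balls generate $b_{\mathcal{C}_E}$ and applying the duality formula on the appropriate translate $z-x$.
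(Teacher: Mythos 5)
Your proposal is correct and follows essentially the same route as the paper: both reduce the claim to showing that closed balls are $\sigma(\mathcal{C}_E,\mathcal{C}_E^{\times})$-closed, and both consume the Fatou hypothesis exactly through the norm-duality formula of Theorem \ref{t3}$(i)$. The only difference is presentational --- you exhibit $B(x,\varepsilon)$ directly as an intersection of weakly closed sets, while the paper argues by contradiction along a weakly convergent net (and treats only $B(0,1)$, the general ball following by translation); the underlying point, weak lower semicontinuity of the norm, is identical.
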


\begin{proof}  It suffices to show that  $B(0,1) =\{x \in \mathcal{C}_E: \|x\|_{\mathcal{C}_E}\leq 1\}$
is closed with respect to the weak
topology $\sigma(\mathcal{C}_E,\mathcal{C}_E^{\times})$. Let $x_\alpha\in B(0,1)$ \ and \
$x_\alpha\stackrel{\sigma(\mathcal{C}_E,\mathcal{C}_E^{\times})}{\longrightarrow} x\in \mathcal{C}_E$. Assume that
$x\notin B(0,1)$, i.e. $\|x\|_{\mathcal{C}_E}=q > 1$, and let $\epsilon > 0$ be such that  $q-\epsilon > 1$. By Theorem
\ref{t3} $(i)$, there exists  $y\in \mathcal{C}_E^{\times}$ such that  $\|y\|_{\mathcal{C}_E^{\times}}\leq 1$ and
 $q\geq |tr(xy)| > q-\epsilon$.

On the other hand,
$x_n\stackrel{\sigma(\mathcal{C}_E,\mathcal{C}_E^{\times})}{\longrightarrow} x$ implies that
$|tr(x_ny)| \to |tr(xy)|$. Since $x_n\in B(0,1)$, it follows that
 $|tr(x_ny)| \leq 1$ and therefore $|tr(xy)| \leq 1$, which is
impossible.
Thus   $B(0,1)$  is a closed set in $\sigma(\mathcal{C}_E,\mathcal{C}_E^{\times})$.
\end{proof}

\section{Weak continuity of Hermitian operators in the perfect ideals  of compact operators}

In this section, we establish $\sigma(\mathcal{C}_E,\mathcal{C}_E^{\times})$-continuity of the Hermitian operator acting in a perfect  Banach symmetric ideal $\mathcal{C}_E$ of compact operators.
For that we need $\sigma(\mathcal{C}_E,\mathcal{C}_E^{\times})$-continuity of a surjective isometry on $\mathcal{C}_E$.

Recall that the series $\sum\limits_{n=1}^{\infty}x_{n}$  converges weakly  unconditionally in a Banach space
$X$ if a numerical series $\sum\limits_{n=1}^{\infty} f(x_{n})$   converges absolutely for every $f\in X^*$ \cite[Ch.
2, \S 3]{woj}.

\begin{proposition} \cite[Ch. 2, \S 3]{woj}.\label{p7}
Let $(X,\|\cdot\|_X)$  be a Banach space, $x_n \in X, \ n \in \mathbb N$. Then the following conditions are equivalent: \\
$(i)$ A series $\sum\limits_{n=1}^{\infty}x_{n}$ converges weakly  unconditionally; \\
$(ii)$ There exists a constant  $C>0$, such that
$$
\sup\limits_N \|\sum\limits_{n=1}^N t_nx_{n} \|_X \leq C
\|\{t_n\}_{n=1}^{\infty}\|_{\infty}
$$
for all $\{t_n\}_{n=1}^{\infty}\in l_{\infty}.$
\end{proposition}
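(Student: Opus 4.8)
The plan is to establish the two implications separately, using a direct sign-choosing argument for one direction and the Uniform Boundedness Principle for the other.

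For $(ii)\Rightarrow(i)$ I would argue directly from the norm estimate. Fix $f\in X^*$ and $N\in\mathbb N$, and choose scalars $t_n$ of modulus one with $t_n f(x_n)=|f(x_n)|$ for $n\le N$ and $t_n=0$ for $n>N$, so that $\|\{t_n\}_{n=1}^\infty\|_\infty\le 1$. Then $\sum_{n=1}^N|f(x_n)|=f\bigl(\sum_{n=1}^N t_n x_n\bigr)$ and hence, by $(ii)$,
$$\sum_{n=1}^N |f(x_n)| \le \|f\|_{X^*}\left\|\sum_{n=1}^N t_n x_n\right\|_X \le C\,\|f\|_{X^*}.$$
Letting $N\to\infty$ gives $\sum_{n=1}^\infty|f(x_n)|\le C\|f\|_{X^*}<\infty$, which is absolute convergence; when only real multipliers are admitted one applies this separately to $\mathrm{Re}\,f$ and $\mathrm{Im}\,f$ and adds the resulting bounds.

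For $(i)\Rightarrow(ii)$ the key is to read the quantity to be bounded as the common operator norm of a family of finite-rank maps. For each $N$ let $u_N$ be the linear operator from the space of scalar sequences $(t_1,\dots,t_N)$ with the sup-norm into $X$ defined by $u_N(t)=\sum_{n=1}^N t_n x_n$; condition $(ii)$ is precisely $\sup_N\|u_N\|<\infty$. The obstacle is that the natural bound $\sup_{\|f\|\le1}\sum_n|f(x_n)|$ is not, because of the absolute values, the norm of any single linear functional, so one cannot apply Banach--Steinhaus to the $x_n$ directly. I would remove this obstacle by passing to the adjoints $u_N^*\colon X^*\to l_1$, which are given by $u_N^*(f)=(f(x_1),\dots,f(x_N),0,0,\dots)$, so that $\|u_N^*(f)\|_{l_1}=\sum_{n=1}^N|f(x_n)|$ and $\|u_N^*\|=\|u_N\|$.

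Now hypothesis $(i)$ says exactly that the linear family $\{u_N^*\}$ is pointwise bounded on $X^*$, since $\sup_N\|u_N^*(f)\|_{l_1}=\sum_{n=1}^\infty|f(x_n)|<\infty$ for each fixed $f$. As $X^*$ is a Banach space, the Uniform Boundedness Principle yields $C:=\sup_N\|u_N^*\|=\sup_N\|u_N\|<\infty$, and therefore
$$\sup_N\left\|\sum_{n=1}^N t_n x_n\right\|_X=\sup_N\|u_N(t)\|_X\le C\,\|\{t_n\}_{n=1}^\infty\|_\infty$$
for every $\{t_n\}_{n=1}^\infty\in l_\infty$, which is $(ii)$. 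The only delicate point, and the crux of the whole argument, is this passage from the nonlinear expression in the $x_n$ to the genuinely linear family $\{u_N^*\}$, after which completeness of $X^*$ makes Banach--Steinhaus applicable; the remaining steps are routine.
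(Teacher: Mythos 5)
Your proof is correct. Note that the paper offers no proof of this proposition at all --- it is quoted directly from Wojtaszczyk \cite[Ch. 2, \S 3]{woj} --- and your argument (the modulus-one multiplier choice for $(ii)\Rightarrow(i)$, and Banach--Steinhaus applied to the partial-sum operators $u_N$ via their adjoints $u_N^*\colon X^*\to l_1$ for $(i)\Rightarrow(ii)$) is essentially the standard textbook proof of this fact, so there is nothing of substance to compare.
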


Proposition \ref{p7} implies the following.
\begin{corollary}\label{c1}
If $V$ is a surjective linear isometry on a Banach space  $X$ and a series $\sum\limits_{n=1}^{\infty}x_{n}$ converges
weakly  unconditionally in  $X$, then the series $\sum\limits_{n=1}^{\infty}V(x_n)$ also converges weakly unconditionally in  $X$.
\end{corollary}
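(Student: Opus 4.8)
The plan is to invoke the quantitative characterization of weak unconditional convergence supplied by Proposition \ref{p7} and to observe that the relevant estimate is manifestly invariant under a linear isometry. First I would apply the implication $(i)\Rightarrow(ii)$ of Proposition \ref{p7} to the series $\sum_{n=1}^{\infty} x_n$: since it converges weakly unconditionally, there is a constant $C>0$ such that $\sup_N \|\sum_{n=1}^N t_n x_n\|_X \leq C\,\|\{t_n\}_{n=1}^{\infty}\|_{\infty}$ for every $\{t_n\}_{n=1}^{\infty}\in l_{\infty}$.

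Next, for each $N\in\mathbb N$ and each scalar sequence $\{t_n\}$, I would use linearity of $V$ to write $\sum_{n=1}^N t_n V(x_n)=V\bigl(\sum_{n=1}^N t_n x_n\bigr)$, and then the fact that $V$ is an isometry to conclude $\|\sum_{n=1}^N t_n V(x_n)\|_X = \|\sum_{n=1}^N t_n x_n\|_X$. Taking the supremum over $N$, the series $\sum_{n=1}^{\infty} V(x_n)$ satisfies $\sup_N \|\sum_{n=1}^N t_n V(x_n)\|_X \leq C\,\|\{t_n\}_{n=1}^{\infty}\|_{\infty}$ with the same constant $C$, i.e. it satisfies condition $(ii)$ of Proposition \ref{p7}.

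Finally, I would apply the reverse implication $(ii)\Rightarrow(i)$ of Proposition \ref{p7} to the series $\sum_{n=1}^{\infty} V(x_n)$ and deduce that it converges weakly unconditionally, which is the assertion. I do not expect any genuine obstacle here: the argument uses only linearity and norm-preservation of $V$ (in fact surjectivity plays no role in this direction), and the entire content of the proof is the recognition that condition $(ii)$ — unlike the defining analytic condition $(i)$ phrased in terms of functionals — is a purely metric statement that transfers verbatim across an isometry. The two-sided nature of Proposition \ref{p7} is precisely what lets us pass from the analytic hypothesis to the norm estimate and back again.
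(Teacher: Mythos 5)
Your proof is correct and is precisely the argument the paper intends: the paper gives no written proof, merely stating that Proposition \ref{p7} implies the corollary, and the evident route is exactly yours --- transfer the norm estimate of condition $(ii)$ through the linear isometry (using $\|\sum_{n=1}^N t_n V(x_n)\|_X = \|V(\sum_{n=1}^N t_n x_n)\|_X = \|\sum_{n=1}^N t_n x_n\|_X$) and apply the equivalence in both directions. Your observation that surjectivity of $V$ is not needed for this implication is also accurate.
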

Now we can show that every surjective linear isometry of $\mathcal{C}_E$ is $\sigma(\mathcal{C}_E,\mathcal{C}_E^{\times})$-continuous.
\begin{proposition}\label{p8}
Let  $\mathcal{C}_E$ be a perfect  Banach symmetric ideal of compact operators and $V$
a surjective linear isometry on $\mathcal{C}_E$. Then
$V$ is $\sigma(\mathcal{C}_E,\mathcal{C}_E^{\times})$-continuous.
\end{proposition}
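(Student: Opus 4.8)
The plan is to verify the dual criterion established in the preliminaries: a bounded linear operator $T$ on $\mathcal{C}_E$ is $\sigma(\mathcal{C}_E,\mathcal{C}_E^{\times})$-continuous if and only if $T^*(f) \in \mathcal{C}_E^{\times}$ for every $f \in \mathcal{C}_E^{\times}$. Hence it suffices to show $V^*(f) \in \mathcal{C}_E^{\times}$ for all $f \in \mathcal{C}_E^{\times}$, and by Proposition \ref{p1} this reduces to proving that $(V^*f)(x_n) = f(V(x_n)) \to 0$ whenever $x_n \downarrow 0$ in $\mathcal{C}_E$ (so that $x_n \in \mathcal{C}_E^+$) and $f \in \mathcal{C}_E^{\times}$.

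First I would show that the telescoping series $\sum_{n=1}^{\infty}(x_n - x_{n+1})$ converges weakly unconditionally in $\mathcal{C}_E$. Writing $z_n = x_n - x_{n+1} \geq 0$ and restricting an arbitrary $f \in \mathcal{C}_E^*$ to the real Banach space $\mathcal{C}_E^h$, I split its real and imaginary parts and use Proposition \ref{p2} to express each as a difference of positive functionals. For a positive functional $g$ the numbers $g(x_n)$ form a nonincreasing nonnegative sequence, so $\sum_{n=1}^N g(z_n) = g(x_1) - g(x_{N+1})$ converges as $N\to\infty$ with nonnegative terms; summing over the four resulting positive functionals gives $\sum_n |f(z_n)| < \infty$, which is exactly weak unconditional convergence. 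By Corollary \ref{c1}, the surjective isometry $V$ sends this to a weakly unconditionally convergent series $\sum_n V(z_n)$. Consequently, for every $f \in \mathcal{C}_E^*$ the partial sums $f(V(x_1 - x_{N+1})) = \sum_{n=1}^{N} f(V(z_n))$ converge as $N\to\infty$, so $\lim_N f(V(x_N))$ exists; in particular $\{V(x_n)\}$ is $\sigma(\mathcal{C}_E,\mathcal{C}_E^{\times})$-Cauchy, and Theorem \ref{t3}$(ii)$ yields some $w \in \mathcal{C}_E$ with $V(x_n) \stackrel{\sigma(\mathcal{C}_E,\mathcal{C}_E^{\times})}{\longrightarrow} w$.

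It then remains to identify $w = 0$, and this is where the ball topology enters. By Proposition \ref{p6}, $b_{\mathcal{C}_E} \leq \sigma(\mathcal{C}_E,\mathcal{C}_E^{\times})$, so $V(x_n) \to w$ in $b_{\mathcal{C}_E}$, and also $x_n \to 0$ in $b_{\mathcal{C}_E}$ (since $x_n \downarrow 0$ forces $f(x_n) \to 0$ for $f \in \mathcal{C}_E^{\times}$ by Proposition \ref{p1}). Since $V^{-1}$ is again a surjective isometry, it is $b_{\mathcal{C}_E}$-continuous, whence $x_n = V^{-1}(V(x_n)) \to V^{-1}(w)$ in $b_{\mathcal{C}_E}$. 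As $\{x_n\}$ is a sequence in $\mathcal{C}_E^+$ with $x_n \downarrow 0$, Theorem \ref{t5} guarantees that it has at most one $b_{\mathcal{C}_E}$-limit; comparing with $x_n \to 0$ forces $V^{-1}(w) = 0$, i.e. $w = 0$. Therefore $f(V(x_n)) \to f(w) = 0$ for all $f \in \mathcal{C}_E^{\times}$, as required.

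The delicate point is the final step: because $b_{\mathcal{C}_E}$ is not Hausdorff, the two candidate $b_{\mathcal{C}_E}$-limits of $\{V(x_n)\}$ cannot be separated directly, and Theorem \ref{t5} applies only to a monotone decreasing sequence. The essential maneuver is to transport the convergence back through $V^{-1}$ onto $\{x_n\}$ itself, where the uniqueness statement is available. I expect the main work to be the careful verification that the telescoping series converges weakly unconditionally (notably the passage to positive functionals via Proposition \ref{p2}), with the ball-topology uniqueness argument as the second subtle ingredient.
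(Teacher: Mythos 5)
Your proposal is correct and follows essentially the same route as the paper's proof: the same reduction via Proposition \ref{p1}, the same telescoping-series argument with Proposition \ref{p2} and Corollary \ref{c1}, sequential completeness from Theorem \ref{t3}$(ii)$, and the identification of the limit as $0$ via Propositions \ref{p6} and Theorem \ref{t5} by pulling the convergence back through $V^{-1}$. The only cosmetic difference is that you call the weak limit $w$ and apply $V^{-1}$ at the end, whereas the paper writes it as $V(x_0)$ from the outset using surjectivity of $V$.
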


\begin{proof} It suffices to show that $ V^*(f) \in \mathcal{C}_E^{\times}$ for each functional
  $ f \in \mathcal{C}_E^{\times}$. According to Proposition \ref{p1}, it should be established that
$x_n\downarrow 0, \  \{x_n\}_{n=1}^{\infty}\subset  \mathcal{C}_E^+$, implies $f(V (x_n)) = V^*(f)(x_n) \to 0 $.

Let  $\{x_n\}_{n=1}^{\infty}\subset \mathcal{C}_E $ \ and \ $x_n\downarrow 0$. We will show that the series
$\sum\limits_{n=1}^{\infty} (x_n-x_{n+1})$ \  converges weakly  unconditionally in  $\mathcal{C}_E$. If $f\in
(\mathcal{C}_E^h)^*$ is a positive linear functional, then
$$
\sum\limits_{n=1}^m |f(x_n-x_{n+1})|=f(\sum\limits_{n=1}^m
(x_n-x_{n+1}))=f(x_1-x_{m+1})\leq f(x_1)
$$
for all $m\in\mathbb N$. Hence the numerical series $\sum\limits_{n=1}^{\infty} f(x_n-x_{n+1})$ converges absolutely.
Since every functional $f\in (\mathcal{C}_E^h)^*$ is the difference of two positive functionals from
$(\mathcal{C}_E^h)^*$ (see Proposition \ref{p2}), the series $\sum\limits_{n=1}^{\infty} (x_n-x_{n+1})$
converges weakly  unconditionally in  $\mathcal{C}_E^h$.

Let $f\in \mathcal{C}_E^*$. Denote
$$
u(x)= Re f(x) = \frac{f(x) + \overline{f(x)}}{2}, \ v(x)= Im f(x) = \frac{f(x) - \overline{f(x)}}{2i}, \ x \in \mathcal{C}_E.
$$
It is clear that $u, v \in (\mathcal{C}_E^h)^*$. Therefore, the series $\sum\limits_{n=1}^{\infty} u(x_n-x_{n+1})$ and $\sum\limits_{n=1}^{\infty} v(x_n-x_{n+1})$ converge absolutely. Thus, the series $\sum\limits_{n=1}^{\infty} f(x_n-x_{n+1})$ also converges absolutely. This means that a series $\sum\limits_{n=1}^{\infty} (x_n-x_{n+1})$ converges weakly
unconditionally in  $\mathcal{C}_E$.

By Corollary \ref{c1}, the series $$\sum\limits_{n=1}^{\infty} (Vx_n-Vx_{n+1})$$ converges weakly unconditionally in  $\mathcal{C}_E$. Hence the numerical series
$$
\sum\limits_{n=1}^{\infty}
f(Vx_n-Vx_{n+1})=f(Vx_1)-\lim\limits_{n\to\infty}f(Vx_n)
$$
converges for every $f\in \mathcal{C}_E^*$. In particular, the limit
 $\lim\limits_{n\to\infty} f(Vx_n)$ exists. Since  $ \mathcal{C}_E$ is a
$\sigma(\mathcal{C}_E,\mathcal{C}_E^{\times})$-sequentially complete set (see Theorem \ref{t3}$(ii)$)
and  $V$  is a bijection, it follows that there exists  $x_0\in \mathcal{C}_E$ such that
$V(x_n)\stackrel{\sigma(\mathcal{C}_E,\mathcal{C}_E^{\times})}{\longrightarrow} V(x_0)$ as
$n\to\infty$.

It remains to show that $V(x_0)=0$. By Proposition \ref{p6}, $$V(x_n)\stackrel{b_{\mathcal{C}_E}}{\longrightarrow} V(x_0).$$
Since the isometry $V^{-1}$ is continuous with respect to the topology  $b_{\mathcal{C}_E}$, it follows that $x_n\stackrel{b_{\mathcal{C}_E}}{\longrightarrow} x_0$.  Now, taking into account that $x_n\downarrow 0$, we obtain $x_n\stackrel{\sigma(\mathcal{C}_E,\mathcal{C}_E^{\times})}{\longrightarrow} 0$ (see Proposition \ref{p1}).
Then, by Proposition \ref{p6}, $x_n\stackrel{b_{\mathcal{C}_E}}{\longrightarrow} 0$, and
Theorem \ref{t5} implies that $x_0=0$.
\end{proof}

Let $(X,\|\cdot\|_X)$ be a complex Banach space.
A bounded linear operator  $T\colon X\to X$  is called
Hermitian, if the operator \ $e^{itT}=\sum\limits_{n=0}^{\infty}
\frac{(itT)^n}{n!}$ \ is an isometry of the space $X$ for all
$t\in\mathbb R$ (see, for example, \cite[Ch. 5, \S 2]{flem}).

We show that a Hermitian operator acting in a perfect  Banach symmetric ideal $\mathcal{C}_E$ is $\sigma(\mathcal{C}_E,\mathcal{C}_E^{\times})$-continuous.

\begin{theorem}\label{t6}
Let $E \subset c_0$ be a  Banach symmetric sequence space  with Fatou property, and let
 $T$ be a Hermitian operator acting in  $\mathcal{C}_E$ . Then
$T$ is $\sigma(\mathcal{C}_E,\mathcal{C}_E^{\times})$-continuous.
\end{theorem}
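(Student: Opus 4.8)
The plan is to realize $T$ as a norm-limit, in $\mathcal B(\mathcal{C}_E)$, of operators that are already known to be $\sigma(\mathcal{C}_E,\mathcal{C}_E^{\times})$-continuous, and then to invoke the norm-closedness of the class of weakly continuous operators recorded in Proposition \ref{p4}. The bridge to the isometry theory is Proposition \ref{p8}: since $T$ is Hermitian, each $e^{itT}$ is an isometry of $\mathcal{C}_E$, and it is surjective because $e^{itT}e^{-itT}=I$ exhibits a two-sided inverse. Hence each $e^{itT}$ is a surjective linear isometry, so Proposition \ref{p8} guarantees it is $\sigma(\mathcal{C}_E,\mathcal{C}_E^{\times})$-continuous. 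The identity operator is trivially $\sigma(\mathcal{C}_E,\mathcal{C}_E^{\times})$-continuous, since its adjoint maps $\mathcal{C}_E^{\times}$ into itself. Because sums and scalar multiples of $\sigma(\mathcal{C}_E,\mathcal{C}_E^{\times})$-continuous operators are again $\sigma(\mathcal{C}_E,\mathcal{C}_E^{\times})$-continuous (as noted in the Preliminaries), the difference quotient
$$
S_t := \frac{1}{it}\bigl(e^{itT}-I\bigr), \qquad t\in\mathbb R\setminus\{0\},
$$
is $\sigma(\mathcal{C}_E,\mathcal{C}_E^{\times})$-continuous for every $t\neq 0$.

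Next I would show $S_t\to T$ in the operator norm as $t\to 0$. Expanding the exponential and subtracting the linear term gives
$$
S_t-T=\frac{1}{it}\sum_{n=2}^{\infty}\frac{(itT)^n}{n!}=\sum_{n=2}^{\infty}\frac{(it)^{n-1}}{n!}\,T^{n},
$$
so that
$$
\|S_t-T\|_{\mathcal B(\mathcal{C}_E)}\leq \sum_{n=2}^{\infty}\frac{|t|^{\,n-1}\|T\|_{\mathcal B(\mathcal{C}_E)}^{\,n}}{n!}
=\frac{1}{|t|}\Bigl(e^{|t|\,\|T\|_{\mathcal B(\mathcal{C}_E)}}-1-|t|\,\|T\|_{\mathcal B(\mathcal{C}_E)}\Bigr)\xrightarrow[t\to 0]{}0,
$$
the right-hand side being $O(|t|)$ by the standard estimate on the tail of the exponential series.

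Finally I would fix a sequence $t_n\to 0$ (for instance $t_n=1/n$) and set $T_n:=S_{t_n}$. By the two previous steps each $T_n$ is $\sigma(\mathcal{C}_E,\mathcal{C}_E^{\times})$-continuous and $\|T_n-T\|_{\mathcal B(\mathcal{C}_E)}\to 0$. Proposition \ref{p4} then yields at once that $T$ is $\sigma(\mathcal{C}_E,\mathcal{C}_E^{\times})$-continuous, which is the assertion.

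I do not anticipate a genuinely hard step here: the entire content is pushed into Proposition \ref{p8}, whose proof is the delicate part (it uses the ball topology and Theorem \ref{t5}). Relative to that, the only point requiring care is the verification that $e^{itT}$ is a \emph{surjective} isometry so that Proposition \ref{p8} applies, which is immediate from invertibility; the norm estimate for $S_t-T$ and the application of Proposition \ref{p4} are routine.
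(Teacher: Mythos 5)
Your proof is correct, and it rests on the same two pillars as the paper's: Proposition \ref{p8} (surjective isometries of a perfect $\mathcal{C}_E$ are $\sigma(\mathcal{C}_E,\mathcal{C}_E^{\times})$-continuous, applied to the exponentials $e^{itT}$, which are indeed surjective since $e^{-itT}$ is a two-sided inverse) and Proposition \ref{p4} (norm-closedness of the class of weakly continuous operators). The difference lies in how $T$ is recovered from the exponentials. The paper fixes a single $t_0$ with $\|e^{it_0T}-I\|_{\mathcal B(\mathcal{C}_E)}<1$ and writes $it_0T=\ln(I+S)=\sum_{n\geq 1}(-1)^{n-1}S^n/n$ with $S=e^{it_0T}-I$; this uses the full algebra of weakly continuous operators (powers $S^n$, hence products) and the smallness condition needed for the logarithm series to converge. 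You instead use the difference quotient $S_t=\frac{1}{it}(e^{itT}-I)$ and let $t\to 0$, with the standard tail estimate $\|S_t-T\|_{\mathcal B(\mathcal{C}_E)}\leq \frac{1}{|t|}\bigl(e^{|t|\,\|T\|}-1-|t|\,\|T\|\bigr)=O(|t|)$. Your route is slightly more elementary: it needs only sums and scalar multiples of weakly continuous operators (no products), and no smallness condition or choice of $t_0$; the price is that you use the whole one-parameter family $\{e^{it_nT}\}$ rather than a single isometry. Both arguments are complete and correct; the only hypothesis translation you left implicit (as does the paper) is that the Fatou property of $E$ is equivalent to perfectness of $\mathcal{C}_E$, which is exactly the chain of equivalences recorded in the Preliminaries, so Proposition \ref{p8} applies.
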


\begin{proof} Consider the non-negative continuous
function $\alpha(t)=\|e^{itT}-I\|_{\mathcal B(\mathcal{C}_E)}$, where  $t\in\mathbb R$. As
$\alpha(0)=0$, it follows that there exists $t_0\in\mathbb R$ such that $\alpha(t_0)< 1$. Since the operator
$T$ is Hermitian, it follows that the operator $V=e^{it_0T}$ (and hence $V^{-1}=e^{-it_0T}$) is an isometry on $\mathcal{C}_E$.  By Proposition \ref{p8}, $S=V-I$ is a $\sigma(\mathcal{C}_E,\mathcal{C}_E^{\times})$-continuous operator. In addition,  $\|S\|=\alpha (t_0)<1$.  Now, since
$$
it_0T=ln(I+S)=\sum\limits_{n=1}^{\infty} \frac{(-1)^{n-1} S^n}{n},
$$
Proposition \ref{p4} implies that  $it_0T$ \ is a $\sigma(\mathcal{C}_E,\mathcal{C}_E^{\times})$-continuous operator.
Therefore, $T$ is a $\sigma(\mathcal{C}_E,\mathcal{C}_E^{\times})$-continuous operator.
\end{proof}

If $a,b$ are self-adjoint operators in $\mathcal B(\mathcal H)$, $x\in \mathcal{C}_E$, and
\begin{equation}\label{e2}
T(x)= ax+xb
\end{equation}
for all $x\in \mathcal{C}_E$, then $T$ is a Hermitian operator acting in  $\mathcal{C}_E$ \cite{sourour}. In the following  theorem, utilizing the method of proof of Theorem 1 in \cite{sourour}, we show that every Hermitian operators
acting in a perfect norm ideal $(\mathcal{C}_E, \|\cdot\|_{\mathcal{C}_E}) \neq C_2$  has the form (\ref{e2}).

\begin{theorem}\label{t7}
Let $E \subset c_0$ be a  Banach symmetric sequence space with Fatou property, \ $E \neq l_2$, and let  $T$ be a Hermitian operator acting in a Banach symmetric ideal  $\mathcal{C}_E$. Then there are self-adjoint operators $a,b\in \mathcal B(\mathcal H)$
such that $T(x)= ax+xb$ for all $x\in \mathcal{C}_E$.
\end{theorem}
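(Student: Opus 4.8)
The plan is to combine the Lumer characterization of Hermitian operators with the weak continuity already secured in Theorem \ref{t6}. Recall that $T$ is Hermitian precisely when its numerical range is real, so that $\operatorname{tr}(T(x)y)\in\mathbb R$ for every norming pair $x\in\mathcal C_E$, $y\in\mathcal C_E^\times$ with $\|x\|_{\mathcal C_E}=\|y\|_{\mathcal C_E^\times}=\operatorname{tr}(xy)=1$; these $\mathcal C_E^\times$-functionals suffice for the finite-rank test operators used below. Since $\mathcal F(\mathcal H)$ is $\sigma(\mathcal C_E,\mathcal C_E^\times)$-dense (Proposition \ref{p3}), since $T$ is $\sigma(\mathcal C_E,\mathcal C_E^\times)$-continuous (Theorem \ref{t6}), and since the candidate map $x\mapsto ax+xb$ has adjoint $y\mapsto ya+by$ carrying $\mathcal C_E^\times$ into itself and is therefore also $\sigma(\mathcal C_E,\mathcal C_E^\times)$-continuous, it will be enough to produce self-adjoint $a,b\in\mathcal B(\mathcal H)$ with $T(x)=ax+xb$ for every finite-rank $x$; the identity on all of $\mathcal C_E$ then follows by weak density and weak continuity.

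Fix an orthonormal basis $\{e_i\}$, set $e_{ij}=e_i\otimes\bar e_j$, and write $T(e_{ij})=\sum_{k,l}t^{ij}_{kl}e_{kl}$. First I would feed rank-one operators into the numerical-range condition. For a unit vector $\zeta$ the projection $\zeta\otimes\bar\zeta$ is normed by itself, so $\langle T(\zeta\otimes\bar\zeta)\zeta,\zeta\rangle\in\mathbb R$; for orthonormal $\xi,\eta$ the pair $(\xi\otimes\bar\eta,\ \eta\otimes\bar\xi)$ is norming, giving $\langle T(\xi\otimes\bar\eta)\eta,\xi\rangle\in\mathbb R$. Polarizing these relations in $\zeta$ and in $(\xi,\eta)$ yields a family of reality constraints on the $t^{ij}_{kl}$. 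The goal is to read off from the column-$j$ coefficients $t^{ij}_{kj}$ an operator $a$, with $\langle ae_i,e_k\rangle:=t^{ij}_{kj}$ (which I must verify is independent of $j$), and from the row-$i$ coefficients $t^{ij}_{il}$ an operator $b$, so that $T(e_{ij})=ae_{ij}+e_{ij}b$; the reality constraints then pin $a,b$ down up to the scalar gauge $a\mapsto a+isI$, $b\mapsto b-isI$ ($s\in\mathbb R$), and allow self-adjoint representatives to be chosen. Boundedness of $a$ and $b$ is inherited from that of $T$ by testing against unit rank-one operators.

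The main obstacle is to show that the genuinely off-diagonal coefficients vanish, that is $t^{ij}_{kl}=0$ whenever $k\neq i$ and $l\neq j$, for exactly these terms are not of the form $ae_{ij}+e_{ij}b$. This is where the hypothesis $E\neq l_2$ must be used. For the Hilbert--Schmidt ideal $\mathcal C_2$ the norm is Euclidean, every operator is normed by a unique (Riesz) functional, the numerical-range condition collapses to self-adjointness of $T$ on the Hilbert space $\mathcal C_2$, and such doubly off-diagonal couplings do survive. When $E\neq l_2$, some finite-dimensional section of the symmetric norm is non-Euclidean, and by unitary and permutation invariance the corresponding matrix corner is non-Euclidean at every location; the resulting non-smoothness of $\|\cdot\|_{\mathcal C_E}$ at rank-one operators supplies norming functionals beyond $\eta\otimes\bar\xi$, and reality of the numerical range against all of them is what annihilates $t^{ij}_{kl}$ for $k\neq i$, $l\neq j$. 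Carrying out this non-smoothness analysis on a suitable finite corner --- exhibiting enough extra norming functionals and extracting the vanishing --- is the technical heart, and is precisely where the method of \cite{sourour} is invoked.

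Finally, once the coefficient identities $t^{ij}_{kl}=\langle ae_i,e_k\rangle\,\delta_{lj}+\delta_{ki}\,\langle be_j,e_l\rangle$ are established with $a=a^\ast$, $b=b^\ast\in\mathcal B(\mathcal H)$, I would assemble $T(e_{ij})=ae_{ij}+e_{ij}b$, extend by linearity to $\mathcal F(\mathcal H)$, and pass to all of $\mathcal C_E$ via the weak-density and weak-continuity argument of the first paragraph, thereby completing the proof.
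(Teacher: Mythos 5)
Your proposal is correct and takes essentially the same route as the paper: both establish $T(x)=ax+xb$ with $a,b$ self-adjoint on $\mathcal F(\mathcal H)$ by deferring to the matrix-unit/numerical-range argument of \cite{sourour} (where the hypothesis $E\neq l_2$ enters), and both then extend the identity to all of $\mathcal C_E$ by combining the $\sigma(\mathcal{C}_E,\mathcal{C}_E^{\times})$-continuity of $T$ (Theorem \ref{t6}), the $\sigma(\mathcal{C}_E,\mathcal{C}_E^{\times})$-continuity of $x\mapsto ax+xb$ (via its adjoint preserving $\mathcal{C}_E^{\times}$, which is exactly the paper's trace computation), and the $\sigma(\mathcal{C}_E,\mathcal{C}_E^{\times})$-density of $\mathcal F(\mathcal H)$ (Proposition \ref{p3}). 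The differences are only presentational: you sketch more of Sourour's finite-rank analysis, which the paper simply cites, and you state the weak-continuity reduction first rather than last.
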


\begin{proof} As in the proof of Theorem  1 from \cite{sourour}, we have that there are self-adjoint operators
$a,b\in \mathcal B(\mathcal H)$ such that $T(x)= ax+xb$ for all
$x\in \mathcal{F} (\mathcal H)$. Fix $x\in \mathcal{C}_E$. By Proposition \ref{p3},  there is a sequence $\{x_n\}\subset
\mathcal{F} (\mathcal H)$ such that $x_n\stackrel{\sigma(\mathcal{C}_E,\mathcal{C}_E^\times)}{\longrightarrow} x$. If
$y\in \mathcal{C}_E^\times$, then
$$
tr(y(ax_n+ x_n b)) = tr((ya)x_n) + tr((by)x_n)\to tr((ya)x)+tr((by)x)=
$$
$$
=tr(y(ax)+tr(y(xb)))=tr(y(ax+xb)).
$$
Therefore
$$
ax_n+x_n b \stackrel{\sigma(\mathcal{C}_E,\mathcal{C}_E^\times)}{\longrightarrow} ax+xb.
$$
Since $T$ is a Hermitian operator, it follows that $T$ is $\sigma(\mathcal{C}_E,\mathcal{C}_E^\times)$-continuous
(see Theorem \ref{t6}). Therefore
$$
 T(x)=\sigma(\mathcal{C}_E,\mathcal{C}_E^\times)-\lim\limits_{n\to\infty}
T(x_n)=\sigma(\mathcal{C}_E,\mathcal{C}_E^\times)-\lim\limits_{n\to\infty}
(ax_n+x_n b)= ax+xb.
$$
\end{proof}

\section{Isometries of  a  Banach symmetric ideal}

In this section, we prove our main result, Theorem \ref {t65}. The proof of Theorem \ref {t65} is similar to  the proof of Theorem 2 in \cite{sourour}. We use a version of Theorem 1 in \cite{sourour} for a perfect Banach symmetric ideal $\mathcal{C}_E$ (Theorem \ref{t7}) as well as  $\sigma(\mathcal{C}_E,\mathcal{C}_E^{\times})$-continuity of every  isometry on $\mathcal{C}_E$ (Proposition \ref{p8}) and $\sigma(\mathcal{C}_E,\mathcal{C}_E^{\times})$-density of the space $\mathcal F(\mathcal H)$ in $\mathcal C_E$ (Proposition \ref{p3}).

Recall that $x^t $  stands for the transpose of an operator $x \in \mathcal K(\mathcal H)$ with respect to a fixed orthonormal basis
in $\mathcal H$.

\begin{theorem} \label{t65}
Let $E \subset c_0$ be a  Banach symmetric sequence space with Fatou property, \ $E \neq l_2$, and let $V$ be a surjective linear isometry on the Banach symmetric ideal  $\mathcal C_E$.
Then there are  unitary operators $u$ and $v$ on $\mathcal H$ such that
\begin{equation}\label{e15}
V(x) = uxv \ \ (\text{or} \ \ V(x) = ux^tv)
\end{equation}
for all $x \in \mathcal C_E$
\end{theorem}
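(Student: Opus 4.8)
The plan is to reduce the statement to the description of Hermitian operators already obtained (Theorem \ref{t7}) together with the weak-topology machinery of Sections 3--4, following the scheme of the proof of Theorem 2 in \cite{sourour}. First I would check that conjugation by $V$ preserves Hermitian operators: for any Hermitian $T$ the one-parameter group $e^{it(VTV^{-1})}=Ve^{itT}V^{-1}$ is a composition of surjective isometries, hence an isometry, for every $t\in\mathbb R$, so $VTV^{-1}$ is again Hermitian. Introducing the left and right multiplications $L_a(x)=ax$ and $R_b(x)=xb$ for $a,b\in\mathcal B(\mathcal H)$, Theorem \ref{t7} says that the Hermitian operators on $\mathcal C_E$ are precisely the operators $L_a+R_b$ with $a=a^{*}$, $b=b^{*}$. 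Thus $T\mapsto VTV^{-1}$ induces a real-linear bijection on the space of such symbols, and I would record the bracket identities $i[L_a,L_c]=L_{i[a,c]}$, $i[R_b,R_d]=R_{-i[b,d]}$ and $[L_a,R_d]=0$, which show that $i[\cdot,\cdot]$ makes the Hermitian operators into a Lie algebra $\mathfrak g=\mathfrak L+\mathfrak R$, where $\mathfrak L=\{L_a:a=a^{*}\}$ and $\mathfrak R=\{R_b:b=b^{*}\}$ are commuting ideals with $\mathfrak L\cap\mathfrak R=\mathbb R I$ (here $I$ is the identity of $\mathcal C_E$).

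The decisive step is a dichotomy: $\mathrm{Ad}_V$ either preserves the pair $\{\mathfrak L,\mathfrak R\}$ or interchanges the two ideals. Writing $VL_aV^{-1}=L_{\phi_1(a)}+R_{\phi_2(a)}$ and $VR_dV^{-1}=L_{\psi_1(d)}+R_{\psi_2(d)}$, the preservation of the bracket forces $\phi_1,\psi_2$ to be Lie homomorphisms and $\phi_2,\psi_1$ anti-homomorphisms, while $[L_a,R_d]=0$ forces $i[\phi_1(a),\psi_1(d)]$ and $-i[\phi_2(a),\psi_2(d)]$ to be equal scalar multiples of $I$; since a nonzero scalar is never a commutator of two bounded operators, both commutators vanish, i.e. $[\phi_1(a),\psi_1(d)]=0$ and $[\phi_2(a),\psi_2(d)]=0$ for all $a,d$. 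Because $\mathrm{Ad}_V$ is onto, the images must generate $\mathcal B(\mathcal H)$ in each coordinate, and two commuting subalgebras cannot both generate the centerless algebra $\mathcal B(\mathcal H)$; hence one cross-map in each coordinate is scalar-valued, which is exactly the asserted dichotomy.

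I expect this step --- pinning down that left multiplications go uniformly to left (respectively, right) multiplications --- to be the main obstacle, and it is where the hypothesis $E\neq l_2$ is essential through Theorem \ref{t7}: for $\mathcal C_2$ the Hermitian operators are far more numerous and the conclusion genuinely fails. In the preserving case one gets $VL_aV^{-1}=L_{\Phi(a)}$ and $VR_bV^{-1}=R_{\Psi(b)}$, where the complex-linear extensions $\Phi,\Psi$ are $*$-automorphisms of $\mathcal B(\mathcal H)$ --- they are multiplicative because $\mathrm{Ad}_V$ respects operator products, e.g. $L_aL_c=L_{ac}$ --- and therefore inner: $\Phi(a)=uau^{*}$, $\Psi(b)=wbw^{*}$ for unitaries $u,w$. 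In the interchanging case $\Phi$ is instead a $*$-anti-automorphism, necessarily $a\mapsto ua^{t}u^{*}$ for the fixed basis.

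Finally I would extract $V$ itself. In the preserving case set $W(x):=u^{*}V(x)w^{*}$; the relations above give $W(axb)=a\,W(x)\,b$ for all $a,b\in\mathcal B(\mathcal H)$, and evaluating on matrix units $e_{ij}=e_{i1}e_{11}e_{1j}$ shows $W(e_{ij})=c\,e_{ij}$ with $c$ independent of $i,j$, so $W$ is a scalar multiple of the identity on $\mathcal F(\mathcal H)$, whence $V(x)=uxv$ after absorbing the unimodular constant into $u$. The interchanging case yields $V(x)=ux^{t}v$ in the same way. The passage from $\mathcal F(\mathcal H)$ to all of $\mathcal C_E$ is then justified by the $\sigma(\mathcal C_E,\mathcal C_E^{\times})$-density of $\mathcal F(\mathcal H)$ (Proposition \ref{p3}) together with the $\sigma(\mathcal C_E,\mathcal C_E^{\times})$-continuity of $V$ (Proposition \ref{p8}) and of the maps $x\mapsto uxv$ and $x\mapsto ux^{t}v$, which closes the argument.
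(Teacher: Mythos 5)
Your overall route is the same as the paper's: the paper proves Theorem \ref{t65} by invoking Theorem \ref{t7} and then ``repeating the proof of Theorem 2 in \cite{sourour}'' to obtain unitaries $u,v$ with $Vl_yV^{-1}=l_{uyu^{*}}$ and $Vr_yV^{-1}=r_{v^{*}yv}$ (or the transposed alternative), after which its only new ingredient is exactly your final paragraph: the reduced isometry $x\mapsto u^{*}V(x)v^{*}$ equals $\lambda\,x$ on $\mathcal F(\mathcal H)$, and this identity is propagated to all of $\mathcal C_E$ using the $\sigma(\mathcal{C}_E,\mathcal{C}_E^{\times})$-continuity of the isometry (Proposition \ref{p8}) and the $\sigma(\mathcal{C}_E,\mathcal{C}_E^{\times})$-density of $\mathcal F(\mathcal H)$ (Proposition \ref{p3}), with $|\lambda|=1$ absorbed into $u$. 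So in outline you and the paper agree, and your treatment of the extension step is precisely the paper's.

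There is, however, a genuine flaw in the one step you chose to prove rather than cite. Having correctly deduced (via the fact that a nonzero scalar is never a commutator of bounded operators) that the ranges of $\phi_1$ and $\psi_1$ commute elementwise, you conclude the dichotomy from the principle that ``two commuting subalgebras cannot both generate the centerless algebra $\mathcal B(\mathcal H)$.'' This principle is false on both counts: $\mathcal B(\mathcal H)$ has center $\mathbb C I$, and, writing $\mathcal H\cong\mathcal H_1\otimes\mathcal H_2$ with both factors infinite-dimensional, the elementwise-commuting subalgebras $\mathcal B(\mathcal H_1)\otimes 1$ and $1\otimes\mathcal B(\mathcal H_2)$ do generate $\mathcal B(\mathcal H)$. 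What the surjectivity of $T\mapsto VTV^{-1}$ actually gives is stronger than algebraic generation, and it is what must be used: since every Hermitian operator is of the form $L_c$ plus $R_{c'}$ and is the image of some $L_a+R_d$, the ranges of $\phi_1$ and $\psi_1$ \emph{additively} span all self-adjoint operators modulo real scalars. In the tensor-product example this additive sum is a tiny subspace, which is why that example is not a counterexample to the true statement; it is the combination of elementwise commutation with additive spanning (plus the Lie homomorphism/anti-homomorphism structure you recorded) that forces one of the two ranges into the scalars, and carrying this out is the real content of Sourour's proof of his Theorem 2, which the paper cites at exactly this point rather than reproving. With that step repaired (or simply cited, as the paper does), the rest of your proposal --- the inner $*$-automorphisms, the matrix-unit computation giving $W=c\cdot\mathrm{id}$ on $\mathcal F(\mathcal H)$, and the weak-topology extension --- is correct and coincides with the paper's argument.
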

Note that   each linear operator of the form (\ref{e15}) is an isometry on every Banach symmetric ideal  $\mathcal C_E$.
\begin{proof}
Let  $y\in \mathcal  B(\mathcal  H)$, and let  $l_y(x)=yx$ (respectively, $r_y(x)=xy$) for all  $x\in \mathcal C_E$. It is clear that \ $l_y$ \ and \ $r_y$ \ are  bounded linear operators acting in $\mathcal C_E$. Using Theorem \ref{t7} and repeating the proof of the Theorem 2 \cite{sourour}, we conclude that  there are unitary operators $u, v \in   \mathcal  B(\mathcal  H)$ such that $V l_yV^{-1} = l_{uyu^*}$ and $V r_yV^{-1} = r_{v^*yv}$ \ for any $y\in\ \mathcal  B(\mathcal  H)$.

In the case $V l_yV^{-1} = l_{uyu^*}$,  we define an isometry $V_0$ on $\mathcal C_E$ by the equation $V_0 (x)=u^* V(x) v^*$.
As in the proof of Theorem 2 \cite{sourour}, we get that $V_0(x) = \lambda x$ for every $x \in \mathcal F(\mathcal H)$ and some $\lambda \in \mathbb C$. Since the isometry $V_0$ is $\sigma(\mathcal{C}_E,\mathcal{C}_E^{\times})$-continuous
(Proposition \ref{p8}) and the space $\mathcal F(\mathcal H)$ is $\sigma(\mathcal{C}_E,\mathcal{C}_E^{\times})$-
 dense in $\mathcal C_E$ (Proposition \ref{p3}), it follows that $V_0(x) = \lambda x$ for every $x \in \mathcal C_E$.
Now, since $V_0$ is an  isometry on $\mathcal C_E$, we have $\lambda =1$, i.e.
$V(x) = uxv$ for all $x \in \mathcal C_E$.

In the case $V l_yV^{-1} = r_{v^*yv}$, as  in the proof of Theorem 2 \cite{sourour}, we use the above to derive
that there are unitary operators $u, v \in  \mathcal  B(\mathcal  H)$ such that $V(x) = ux^tv$ for all $x \in \mathcal C_E$.
\end{proof}

\begin{corollary}\label{c2}
Let $E \subset c_0$ be a  Banach symmetric sequence space with Fatou property, \ $E \neq l_2$, and let $V$ be a surjective linear isometry on $\mathcal C_E$. Then
 $V(yx^*y) = V(y) (V(x))^* V(y)$ for all $x,y\in \mathcal C_E$.
\end{corollary}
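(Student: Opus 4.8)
The plan is to reduce the identity to a direct computation by invoking Theorem~\ref{t65}, which already pins down the form of every surjective linear isometry on $\mathcal{C}_E$ under the present hypotheses ($E$ has the Fatou property and $E\neq l_2$). By that theorem there are unitary operators $u,v\in\mathcal B(\mathcal H)$ such that either $V(x)=uxv$ for all $x\in\mathcal C_E$, or $V(x)=ux^tv$ for all $x\in\mathcal C_E$. Since both sides of the asserted equality $V(yx^*y)=V(y)(V(x))^*V(y)$ are built from $V$ in an explicit algebraic fashion, I would simply substitute each representation and verify the equality, the only nontrivial inputs being the unitarity relations $u^*u=uu^*=I$ and $v^*v=vv^*=I$.

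In the multiplicative case $V(x)=uxv$ I would first compute the left-hand side as $V(yx^*y)=u\,yx^*y\,v$. For the right-hand side I would use $(V(x))^*=(uxv)^*=v^*x^*u^*$ together with $V(y)=uyv$, so that
$$
V(y)(V(x))^*V(y)=(uyv)(v^*x^*u^*)(uyv)=u\,y\,(vv^*)\,x^*\,(u^*u)\,y\,v=u\,yx^*y\,v,
$$
which coincides with $V(yx^*y)$. The cancellations $vv^*=I$ and $u^*u=I$ are exactly what forces the two sides to agree.

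For the transpose case $V(x)=ux^tv$ the only extra ingredient is the elementary behaviour of the transpose relative to the fixed orthonormal basis, namely $(ab)^t=b^ta^t$ for $a,b\in\mathcal B(\mathcal H)$ and $(x^*)^t=(x^t)^*$. Using these, the left-hand side becomes $V(yx^*y)=u\,(yx^*y)^t\,v=u\,y^t(x^*)^ty^t\,v=u\,y^t(x^t)^*y^t\,v$, while the right-hand side is
$$
V(y)(V(x))^*V(y)=(uy^tv)(v^*(x^t)^*u^*)(uy^tv)=u\,y^t\,(vv^*)\,(x^t)^*\,(u^*u)\,y^t\,v=u\,y^t(x^t)^*y^t\,v,
$$
so again the two expressions coincide.

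I do not expect any genuine obstacle here: the substantive content is entirely contained in Theorem~\ref{t65}, and once the explicit form of $V$ is available the corollary is a one-line verification in each of the two cases. The only point that demands a little care is keeping track of the identity $(x^*)^t=(x^t)^*$ in the transpose case, so that the adjoint taken \emph{after} applying $V$ matches the adjoint sitting inside the argument of $V$; with that in hand the computation closes immediately.
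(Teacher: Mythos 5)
Your proposal is correct and takes essentially the same approach as the paper: invoke Theorem~\ref{t65} to obtain the form $V(x)=uxv$ or $V(x)=ux^tv$, then verify the identity by direct computation in each case, using unitarity of $u,v$ and, in the transpose case, the identities $(ab)^t=b^ta^t$ and $(x^*)^t=(x^t)^*$. The paper's proof is the same one-line verification, written slightly more compactly.
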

\begin{proof} By Theorem  \ref{t65}, there are unitary operators $u$ and $v$ on $\mathcal H$ such that $V(x) = uxv \ \ \text{or} \ \ V(x) = ux^tv$
for all $x \in \mathcal C_E$. If $V(x) = uxv$, then
$$V(y) (V(x))^* V(y) =
(uyv)(uxv)^*(uyv)=uyx^*yv=V(yx^*y), \ x, y \in \mathcal C_E.
$$
Since  $(x^t)^*=(x^*)^t, \ x \in \mathcal B(\mathcal H)$, in the case $V(x) = ux^tv$, we have
$$V(y) (V(x))^* V(y) =
(uy^tv)(ux^tv)^*(uy^tv)=
$$
$$
=uy^t(x^*)^ty^tv= u(y(x^*)y)^tv=V(yx^*y), \ x, y \in \mathcal C_E.
$$
\end{proof}

Let  $(X,\|\cdot\|_X)$ be an arbitrary complex Banach space. A surjective (not necessarily linear) mapping   $T\colon X \to X$ is called a surjective 2-local isometry \cite{molnar}, if
for any $x, y \in X $ there exists a surjective linear isometry  $V_{x, y}$ on  $ X $ such that $T(x) = V_{x, y}(x)$ and
$T(y) = V_{x, y}(y) $. It is clear that every surjective linear isometry on $X$ is automatically a surjective 2-local isometry
on $ X $. In addition, 
$$
T(\lambda x) = V_{x, \lambda x}(\lambda x) = \lambda V_{x, \lambda x}(x) = \lambda T(x) $$
for any $x\in X$ and $\lambda \in \mathbb C$.
In particular,
\begin{equation}\label{e25}
T (0) = 0.
\end{equation}
Thus, in order to establish linearity of a 2-local isometry $ T $, it is sufficient to show that
$ T(x + y) = T(x) + T(y) $ for all $ x, y \in X $.

Note also that
\begin{equation}\label{e35}
 \|T (x) -T (y)\|_X = \|V_{x, y}(x) - V_{x, y}(y)\|_X = \| x - y \|_X
\end{equation}
for any $ x, y \in X $.
Therefore, in the case a real Banach space $X$, \ from (\ref{e25}), (\ref{e35}) and Mazur-Ulam Theorem (see, for example, \cite [Chapter I, \S 1.3, Theorem 1.3.5.] {flem}) it follows 
that every surjective 2-local isometry on $ X $ is a linear. For  complex Banach spaces, this fact is not valid.

Using the description of isometries on  a minimal Banach symmetric ideal $\mathcal{C}_E$ from  \cite{sourour},  L. Molnar proved that every surjective 2-local isometry on a  minimal Banach symmetric ideal $\mathcal{C}_E$  is necessarily linear \cite[Corallary 5]{molnar}.

The following Theorem is a version of Molnar's result for a perfect Banach symmetric ideal.
\begin{theorem} \label{t75}
Let $E \subset c_0$ be a  Banach symmetric sequence space with Fatou property, \ $E \neq l_2$, and let $V$ be a surjective 2-local isometry on a Banach symmetric ideal  $\mathcal C_E$.
Then $V$ is a linear isometry on  $\mathcal C_E$.
\end{theorem}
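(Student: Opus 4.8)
The plan is to reduce everything to additivity and then to manufacture, out of the 2-local hypothesis and Corollary \ref{c2}, a single sesquilinear identity that determines $T$ coordinatewise. Recall that $T$ is already homogeneous with $T(0)=0$ and $\|T(x)-T(y)\|_{\mathcal C_E}=\|x-y\|_{\mathcal C_E}$ (see (\ref{e25}), (\ref{e35})); since $\mathcal C_E$ is a complex space, Mazur--Ulam does not apply, so the entire task is to prove $T(x+y)=T(x)+T(y)$ for all $x,y\in\mathcal C_E$. By homogeneity, additivity will give linearity, and then Theorem \ref{t65} will force the form (\ref{e15}).

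For $w\in\mathcal H$ write $w\otimes\bar w$ for the positive rank-one operator $h\mapsto\langle h,w\rangle w$. By Theorem \ref{t65} every surjective linear isometry carries rank-one operators to rank-one operators, so each $T(w\otimes\bar w)$ is rank one; write $T(w\otimes\bar w)=\eta_w\otimes\bar\zeta_w$. The key computation is the following. Fix $a\in\mathcal C_E$ and $w\in\mathcal H$, put $b=w\otimes\bar w$, and let $V=V_{a,b}$ be the isometry supplied by 2-locality, so $V(a)=T(a)$ and $V(b)=T(b)\neq0$. Since $ba^{*}b=\langle w,aw\rangle\,b$ is a scalar multiple of $b$, linearity of $V$ together with homogeneity of $T$ gives $V(ba^{*}b)=\langle w,aw\rangle\,T(b)$, while Corollary \ref{c2} gives $V(ba^{*}b)=T(b)\,T(a)^{*}\,T(b)=\langle\eta_w,T(a)\zeta_w\rangle\,T(b)$. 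Cancelling $T(b)$ and conjugating yields
\[
\langle T(a)\zeta_w,\eta_w\rangle=\langle aw,w\rangle\qquad\text{for all }a\in\mathcal C_E,\ w\in\mathcal H .
\]
Crucially this already holds on all of $\mathcal C_E$, with no appeal to density, precisely because $ba^{*}b$ collapses to a scalar multiple of the rank-one $b$; note also that the right-hand side depends linearly on $a$.

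The hard part will be to show that $w\mapsto T(w\otimes\bar w)$ is implemented by one fixed pair of unitaries. Feeding $a=w'\otimes\bar{w'}$ into the displayed identity produces a transition relation of the form $\langle\eta_{w'},\eta_w\rangle\langle\zeta_w,\zeta_{w'}\rangle=|\langle w,w'\rangle|^{2}$, while applying 2-locality to the pair $(w\otimes\bar w,\,w'\otimes\bar{w'})$ shows that on any two rays the factorizations are simultaneously realized by a common isometry of the form $x\mapsto uxv$ or $x\mapsto ux^{t}v$. I would combine these through a Wigner-type rigidity argument to conclude that, after normalizing the scalar ambiguity in $\eta_w\otimes\bar\zeta_w$, there are fixed unitaries $u,v$ and a single global choice of the straight/transpose alternative such that either $\eta_w=uw,\ \zeta_w=v^{*}w$ for every $w$, or $\eta_w=uJw,\ \zeta_w=v^{*}Jw$ for every $w$, where $J$ is the conjugation fixing the chosen basis. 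Ruling out a ray-by-ray mixing of the two alternatives and upgrading the pairwise implementations to a single global one is the real obstacle; this is the step that mirrors Molnár's argument in \cite[Corollary 5]{molnar}, and Proposition \ref{p8} together with the $\sigma(\mathcal C_E,\mathcal C_E^{\times})$-density of $\mathcal F(\mathcal H)$ (Proposition \ref{p3}) are available to propagate conclusions from rank-one operators to all of $\mathcal C_E$.

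Granting the global implementation, additivity is immediate. Fix $x,y\in\mathcal C_E$ and set $D=T(x+y)-T(x)-T(y)$. Because the displayed identity is additive in its right-hand side, $\langle D\zeta_w,\eta_w\rangle=0$ for every $w\in\mathcal H$. In the straight case this reads $\langle u^{*}Dv^{*}w,w\rangle=0$ for all $w$, and since a bounded operator on a complex Hilbert space whose diagonal quadratic form vanishes identically must be zero, $u^{*}Dv^{*}=0$, i.e. $D=0$; the transpose case is identical with $w$ replaced by $Jw$. Hence $T$ is additive, and being homogeneous it is a surjective linear isometry, so by Theorem \ref{t65} it has the form (\ref{e15}).
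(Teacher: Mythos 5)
Your opening step is correct and genuinely different from the paper's: the identity $\langle T(a)\zeta_w,\eta_w\rangle=\langle aw,w\rangle$ for \emph{every} $a\in\mathcal C_E$ and $w\in\mathcal H$ (equivalently $tr\bigl(T(a)\,T(w\otimes\bar w)^*\bigr)=tr\bigl(a\,(w\otimes\bar w)^*\bigr)$) is correctly derived from 2-locality plus Corollary \ref{c2}, and it is attractive precisely because it needs no density argument. But the proposal has a genuine gap exactly where you flag ``the real obstacle'': the Wigner-type rigidity claim --- that all pairs $(\eta_w,\zeta_w)$ are simultaneously implemented by one fixed pair of unitaries and one global straight/transpose choice --- is asserted, never proved, and your concluding additivity argument collapses without it. From $\langle D\zeta_w,\eta_w\rangle=0$ alone nothing follows unless the family $\{(\zeta_w,\eta_w)\}_{w\in\mathcal H}$ is known to be total, which is precisely what the unproved rigidity would supply; the transition relation $\langle\eta_{w'},\eta_w\rangle\langle\zeta_w,\zeta_{w'}\rangle=|\langle w,w'\rangle|^2$ together with pairwise common implementation is the \emph{hypothesis} of such a rigidity theorem, not its proof, and upgrading ray-by-ray data to a single global unitary (while excluding a mixture of the two alternatives) is a substantial argument that the proposal does not carry out. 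As written, the proof is therefore incomplete.

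The gap is in fact avoidable, and without any rigidity: your computation works verbatim for an \emph{arbitrary} rank-one $b=\xi\otimes\bar\psi$, since $ba^*b=\langle\xi,a\psi\rangle b$, giving $\langle T(a)\zeta_b,\eta_b\rangle=\langle a\psi,\xi\rangle$ where $T(b)=\eta_b\otimes\bar\zeta_b$. Because $T$ is surjective and $T(x)=V_{x,x}(x)$ with $V_{x,x}$ of the canonical form (\ref{e15}), $T$ maps the set of rank-one operators \emph{onto} itself; hence as $b$ runs over all rank-one operators, $(\eta_b,\zeta_b)$ runs (up to scalars) over all pairs of nonzero vectors, and $\langle D\zeta_b,\eta_b\rangle=0$ then forces $D=0$ directly. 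For comparison, the paper takes yet another route: it proves $tr(V(x)V(y)^*)=tr(xy^*)$ only for finite-rank $x,y$, gets additivity on $\mathcal F(\mathcal H)$ from $tr(DD^*)=0$ by testing against $z\in\{x+y,x,y\}$, extends $V$ to a surjective linear isometry of the minimal ideal $\mathcal I=\overline{\mathcal F(\mathcal H)}$, applies Sourour's theorem there, and finishes with Molnár's ending to pass from $\mathcal I$ to all of $\mathcal C_E$. Your scheme, once repaired as above, would be shorter than the paper's, because your identity already lives on all of $\mathcal C_E$ and no extension step is needed; but in its present form it proves less than the paper does.
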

\begin{proof}
Fix $x,y\in \mathcal F(\mathcal H)$ and let  $V_{x,y}\colon \mathcal C_E \to \mathcal C_E$ be a surjective  isometry such that
$V(x)=V_{x,y}(x)$ \ and \ $V(y)=V_{x,y}(y)$. By Theorem \ref{t65}, \ there are  unitary operators $u$ and $v$ on $\mathcal H$ such that $V(x) = uxv$ \ or  \ $V(x) = ux^tv$ (respectively, $V(y) = uyv$ \ or  \ $V(y) = uy^tv$).
Then we have
$$tr(V(x)(V(y))^*)=tr(V_{x,y}(x)(V_{x,y}(y))^*) = tr(xy^*)$$
for every $x,y\in \mathcal F(\mathcal H)$. In addition, $V(x) \in \mathcal F(\mathcal H)$ and $V$ is a bijective mapping
on $ \mathcal F(\mathcal H)$.

If $x,y,z\in \mathcal F(\mathcal H)$, then
$$tr(V(x+y)(V(z))^*)=tr((x+y)z^*), \ \  tr(V(x)V(z)^*)=tr(xz^*),
$$
$$tr(V(y)V(z)^*)=tr(yz^*).$$
Therefore
$$tr((V(x+y)-V(x)-V(y))(V(z))^*)=0$$
for all  $z\in\mathcal F(\mathcal H)$.
Taking $z=x+y$, \ $z=x$ and $z=y$, we obtain
$$tr((V(x+y)-V(x)-V(y)((V(x+y)-V(x)-V(y))^*)=0,
$$
that is, $V(x+y)=V(x)+V(y)$ for all  $x,y\in \mathcal F(\mathcal H)$.
Hence  $V$ is a bijective linear isometry on a normed space  $(\mathcal F(\mathcal H), \|\cdot\|_{\mathcal C_E})$.

Let $\mathcal I$  be the closure of the subspace $\mathcal F(\mathcal H)$ in the Banach space $(\mathcal C_E, \|\cdot\|_{\mathcal C_E})$. It is clear that $(\mathcal I, \|\cdot\|_{\mathcal C_E})$ is a minimal Banach symmetric ideal. Since $V$ is a surjective linear isometry on $(\mathcal F(\mathcal H), \|\cdot\|_{\mathcal C_E})$, it follows that $V$ is a surjective linear isometry on $(\mathcal I, \|\cdot\|_{\mathcal C_E})$.

By Theorem  2 \cite{sourour}, there are  unitary operators $u_1$ and $v_1$ on $\mathcal H$ such that $V(x) = u_1xv_1$ \ or \ $V(x) = u_1x^tv_1$ for all $x \in \mathcal I$. Repeating the ending of the proof of Theorem 1 in \cite{molnar}, we
conclude that  $V$ is a linear isometry on  $\mathcal C_E$.
\end{proof}


\begin{thebibliography}{99}

\bibitem{an} T. Ando. On fundamental properties of Banach  space with a cone. Pacific J. Math. 12(1962), 1163-1169

\bibitem{a} J. Arazy. The isometries of $C_p$. Israel J. Math. 22(1975), 247-256.

\bibitem{C} J. Calkin, Two-sided ideals and congruences in the ring of bounded operators in Hilbert space, Ann. of Math.  1941. V 42. No 2. P. 839-873.

\bibitem{dodds} P.G.Dodds, C.J.Lennard. Normality in trace ideals.
I. Operator theory. V. 16. 1986. P. 127-145.

\bibitem{DDP1} P. G. Dodds, T. K. Dodds, and B. Pagter, Noncommutative K\"{o}the duality,
Trans. Amer. Math. Soc. 339 (2)(1993), 717-750.

\bibitem{flem} R.J.Fleming, J.E.Jamison. Isometries on Banach spaces: function
spaces. Chapman-Hall/CRC. 2003.

\bibitem{garling} D.J.H.Garling. On ideals of operators in hilbert
space. Proc. London Math. Soc. V. 17. 1967. P. 115-138.

\bibitem{godefroykalton} G.Godefroy, N.J.Kalton, \textit{The ball topology and its applications.}~--Contemp. Math. {\bf 85} (1989), 195--237.

\bibitem{gohberg} I.C. Gohberg , M.G.Krein. Introduction  to the theory of linear nonselfadjoint operators . Translations  of mathematical monographs. Volume 18. AMERICAN MATHEMATICAL SOCIETY.Providence, Rhode Island 02904. 1969

\bibitem {KS}  N.J. Kalton, F.A. Sukochev, Symmetric norms and spaces of operators, J. Reine Angew. Math., 2008. V. 621. P. 81-121.

\bibitem {kelli} L.Kelley,  \textit{General Topology}. Graduate Texts in Mathematics. Vol. 27. Springer-Verlag New York. 1975.


\bibitem {LT} J.Lindenstrauss, L.Tzafriri. \textit{Classical Banach spaces.} Springer-Verlag.  Berlin-New York. 1996.

\bibitem{molnar} L.Molnar. 2-local isometries of some operator
algebras. Proc.Eding.Math.Soc. V.45. 2002. P. 349-352.

\bibitem{polya} G.Polya and G.Szego.  Problems and theorems in analysis. Die Grundlehren der math. Wissenschaften. Springer-Verlag. Berlin and New York. Vol. I. 1972.


\bibitem {r} B. Russo. Isometries of the trace class. Proc. Amer. Math. Soc. 23 (1969). 213.

\bibitem {S} R. Schatten, Norm Ideals of Completely Continuous Operators. Springer-Verlag.
Berlin. 1960.

\bibitem{SI} B. Simon, \textit{Trace ideals and their applications,} Second edition. Mathematical Surveys and Monographs, {\bf 120}. American Mathematical Society, Providence, RI, 2005.

\bibitem{sourour} A.Sourour, Isometries of norm ideals of compact
operators. J.Funct.Anal. V. 43. 1981. P. 69-77







\bibitem{woj} P. Wojtaszczyk. Banach spaces for analysts.
Cambridge Univercity Press. 1991.

\end{thebibliography}
\end{document}